\newcommand{\Q}{\mathbb{Q}}
\newcommand{\Image}{{\operatorname{Im}\,}}
\newcommand{\Char}{{\operatorname{Char}\,}}
\newcommand{\D}{{\operatorname{Dist}\,}}
\newcommand{\Cent}{{\operatorname{Cent}}}
\newcommand{\tr}{{\operatorname{tr}}}
\newcommand{\discr}{{\operatorname{Discr}}}
\newcommand{\ssl}{{\operatorname{sl}}}
\newcommand{\GL}{{\operatorname{GL}}}
\newcommand{\N}{\mathbb{N}}
\newcommand{\R}{\mathbb{R}}
\theoremstyle{definition}
\newtheorem{definition}{Definition}
\newtheorem{rem}{Remark}
\newtheorem*{rem*}{Remark}
\newtheorem*{acknow*}{Acknowledgements}
\newtheorem*{examples*}{Examples}
\theoremstyle{plain}
\newtheorem{lemma}{Lemma}
\newtheorem{prop}{Proposition}
\newtheorem{theorem}{Theorem}
\newtheorem*{theorem*}{Theorem}
\newenvironment{proof-sketch}{\noindent{\bf Sketch of Proof}\hspace*{1em}}{\qed\bigskip}
\newenvironment{proof-idea}{\noindent{\bf Proof Idea}\hspace*{1em}}{\qed\bigskip}
\newenvironment{proof-of-lemma}[1]{\noindent{\bf Proof of Lemma #1}\hspace*{1em}}{\qed\bigskip}
\newenvironment{proof-of-prop}[1]{\noindent{\bf Proof of Proposition #1}\hspace*{1em}}{\qed\bigskip}
\newenvironment{proof-of-thm}[1]{\noindent{\bf Proof of Theorem #1}\hspace*{1em}}{\qed\bigskip}
\newenvironment{proof-attempt}{\noindent{\bf Proof Attempt}\hspace*{1em}}{\qed\bigskip}
\begin{document}

\title[Images of non-commutative polynomials ]{The images of non-commutative polynomials evaluated on $2\times 2$ matrices over an arbitrary field.}
\author{Sergey Malev}

\address{Department of mathematics, Bar-Ilan University,
Ramat Gan, Israel} \email {malevs@math.biu.ac.il}
\thanks{The author was
supported by an Israeli Ministry of Immigrant Absorbtion
scholarship.}
\thanks{This research was supported by the Israel Science Foundation
(grant no. 1207/12).}

\begin{abstract}
Let $p$ be a multilinear polynomial in several non-commuting
variables with coefficients in  an arbitrary field $K$. Kaplansky
conjectured that for any $n$, the image of $p$ evaluated on the
set $M_n(K)$ of $n$ by $n$ matrices is either zero, or the set of
scalar matrices, or the set $sl_n(K)$ of matrices of trace $0$, or
all of $M_n(K)$. This conjecture was proved for $n=2$ when $K$ is
closed under quadratic extensions. In this paper the conjecture is
verified for $K=\mathbb{R}$ and $n=2$, also for semi-homogeneous
polynomials  $p$, with a partial solution for an arbitrary field
$K$.
\end{abstract}

\maketitle

\section{Introduction}
This paper is the continuation of \cite{BMR1}, in which
Kanel-Belov, Rowen and the author considered the question, reputedly
raised by Kaplansky, of the possible image set $\Image p$ of a
polynomial $p$ on matrices. (L'vov later reformulated this for
multilinear polynomials, asking whether $\Image p$ is a vector subspace.)

For an arbitrary polynomial, the
question was settled for the case when $K$ is a finite field
 by Chuang \cite{Ch}, who proved that a subset $S \subseteq
M_n(K)$ containing $0$ is the image of a polynomial with constant
term zero, if and only if $S$ is invariant under conjugation.
Later Chuang's result was generalized by Kulyamin \cite{Ku1},
\cite{Ku2} for graded algebras.

For homogeneous polynomials, the question was settled for the case when the field $K$ is algebraically closed  by
\v{S}penko \cite{S}, who proved
that the union of the zero matrix and a standard open set closed under conjugation by $\GL_n(K)$ and nonzero scalar multiplication
is the image of a homogeneous polynomial.

In \cite{BMR1}  the field $K$ was required to be quadratically
closed.  Even for the field $\mathbb{R}$ of real numbers
Kaplansky's question remained   open, leading people to ask  what
happens if the field is not quadratically closed? This paper
provides a positive answer.

 The main result in this note is for
$n=2$,   settling the major part of Kaplansky's Conjecture in this case,
proving the following result (see \S\ref{def1} for terminology):

\begin{theorem}
\label{main} If $p$ is a multilinear polynomial evaluated on the
matrix ring $M_2(K)$ (where $K$ is an arbitrary field), then
$\Image p$ is either $\{0\}$, or  $K$ (the set of scalar
matrices), or $\ssl_2\subseteq\Image p$. If $K=\mathbb{R}$ then
$\Image p$ is either $\{0\}$, or  $K$, or $\ssl_2$ or $M_2$.
\end{theorem}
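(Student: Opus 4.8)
The plan is to study $\Image p$ through three elementary closure properties. First, $\Image p$ is invariant under conjugation by $\GL_2(K)$. Second, since $p$ is multilinear, $p(\lambda x_1,x_2,\dots,x_m)=\lambda\,p(x_1,\dots,x_m)$, so $\Image p$ is stable under multiplication by every scalar $\lambda\in K$; in particular $0\in\Image p$. Third, for fixed $A_2,\dots,A_m\in M_2(K)$ the map $x\mapsto p(x,A_2,\dots,A_m)$ is $K$\dash linear, so its image is a linear subspace of $M_2(K)$ lying inside $\Image p$; thus $\Image p$ is a conjugation\dash and scaling\dash stable union of linear subspaces. I will also use freely the structure of $M_2(K)$: a non\dash scalar $2\times 2$ matrix is determined up to conjugacy by its characteristic polynomial, and (in characteristic $\ne 2$) $M_2=K\cdot I\oplus\ssl_2$, with $\det$ restricting to the hyperbolic binary form $-ab$ on $\operatorname{span}(e_{12},e_{21})\subseteq\ssl_2$, whose $\GL_2(K)$\dash conjugates sweep out all of $\ssl_2$.

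Next I dispose of the degenerate alternatives. If $p$ is a polynomial identity of $M_2(K)$, then $\Image p=\{0\}$. If $p$ is central (every value scalar) but not an identity, then some value is a nonzero scalar, and scaling\dash stability forces $\Image p=K$. So from now on $p$ is assumed non\dash central, and the task is to prove $\ssl_2\subseteq\Image p$ for arbitrary $K$, and the sharper dichotomy $\Image p\in\{\ssl_2,M_2\}$ for $K=\R$.

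For the inclusion $\ssl_2\subseteq\Image p$: non\dash centrality together with the third closure property gives a choice of $A_2,\dots,A_m$ for which the subspace $V=\{p(x,A_2,\dots,A_m):x\in M_2\}$ is not contained in $K\cdot I$. I would then run a case analysis on $\dim V$ and on the position of $V$ relative to $K\cdot I$ and $\ssl_2$, bringing in a second variable (the subspaces produced by $(x,y)\mapsto p(x,y,A_3,\dots,A_m)$) and rescaling, with the aim of exhibiting inside $\Image p$, for every $d\in K$, a trace\dash zero matrix of determinant $d$ together with a nonzero nilpotent; equivalently, a subspace conjugate to $\operatorname{span}(e_{12},e_{21})$. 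Conjugation\dash invariance then upgrades this to $\ssl_2\subseteq\Image p$. I expect this to be the principal obstacle of the whole argument, and the reason the general\dash$K$ statement is only partial: over a field that is not quadratically closed, rescaling a single trace\dash zero value produces determinants within only one square class, so one genuinely needs the additive room coming from multilinearity to capture every determinant, and the characteristic\dash$2$ case (where the trace form on $\ssl_2$ is degenerate and $\ssl_2$ contains all scalar matrices) must be treated separately.

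Finally, the refinement for $K=\R$: assume $\Image p\supsetneq\ssl_2$; I must show $\Image p=M_2(\R)$. The new input over $\R$ is connectedness: $p$ is a polynomial map, hence continuous, so $\Image p$ is the continuous image of the connected space $M_2(\R)^m$, hence connected (and semialgebraic). Since it properly contains $\ssl_2$ it contains some $B$ with $\tr B\ne 0$, hence by scaling and conjugation the full conjugacy class of $\lambda B$ for every $\lambda$. Passing to the invariants $(\tr,\det)$, $\Image p$ corresponds to a connected subset $S\subseteq\R^2$ that is stable under $(\tau,\delta)\mapsto(\lambda\tau,\lambda^2\delta)$, contains the whole line $\{\tau=0\}$, and meets its complement; using these properties one shows $S=\R^2$ (the scaling orbit of the extra point is a parabola through the origin, and connectedness together with the line $\{\tau=0\}$ forces the intermediate region to be filled), and then, since every $(\tau,\delta)$ is the characteristic data of a companion matrix, conjugation\dash invariance yields every non\dash scalar matrix. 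The delicate endgame is showing that the scalar matrices $\lambda I$ themselves lie in $\Image p$: this is invisible to the $(\tr,\det)$ picture, because on the parabola $\delta=\tau^2/4$ the scalar class and the Jordan class share a characteristic polynomial, so it needs a direct argument (e.g.\ using that $\Image p$ already contains the Jordan blocks $\left(\begin{smallmatrix}\lambda&1\\0&\lambda\end{smallmatrix}\right)$ and is closed under the operations above, or a limiting argument). I expect this last point, rather than the connectedness itself, to be the main difficulty of the $\R$\dash part.
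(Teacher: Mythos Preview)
Your plan correctly isolates the targets --- the plane $\langle e_{12}, e_{21}\rangle$ for arbitrary $K$, and the full $(\tr,\det)$-plane for $K=\R$ --- but in both parts the proposed mechanism is either absent or incorrect.

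\textbf{Part 1 (arbitrary $K$).} You recognise that it suffices to exhibit a conjugate of $\langle e_{12},e_{21}\rangle$ inside $\Image p$, but ``a case analysis on $\dim V$'' with $V=\{p(x,A_2,\dots,A_m):x\in M_2\}$ is not a method: even when $\dim V\ge 2$ there is no reason for $V$ to be conjugate to that particular plane, and bringing in a second slot does not obviously help. The paper's device is different. One first evaluates at matrix units to get $p(a_1,\dots,a_m)=e_{12}$ (Lemmas~\ref{graph},~\ref{linear}) and observes that the index swap $\chi\colon e_{11}\leftrightarrow e_{22},\ e_{12}\leftrightarrow e_{21}$ sends this to $e_{21}$. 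Then one varies \emph{all} slots simultaneously, each along the line through $a_i$ and $\chi(a_i)$, via the multilinear map
\[
f(T_1,\dots,T_m)=p\bigl(t_1a_1+\tau_1\chi(a_1),\dots,t_ma_m+\tau_m\chi(a_m)\bigr),\qquad T_i=(t_i,\tau_i)\in K^2.
\]
A parity argument from \cite{BMR1} shows every term of $f$ lands in $\langle e_{12},e_{21}\rangle$; since $f$ hits the independent vectors $e_{12}$ and $e_{21}$, Lemma~\ref{dim2} gives $\Image f=\langle e_{12},e_{21}\rangle$. This captures every determinant value at once, with no case split, and the same $\chi$-trick applied to a diagonal matrix-unit evaluation handles the scalar matrices in characteristic~$2$.

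\textbf{Part 2 ($K=\R$).} Here there is a genuine error. Your claimed implication --- that a connected $S\subseteq\R^2$ stable under $(\tau,\delta)\mapsto(\lambda\tau,\lambda^2\delta)$, containing $\{\tau=0\}$ and meeting its complement, must equal $\R^2$ --- is false: for any fixed $c$ the set $\{\tau=0\}\cup\{\delta=c\tau^2\}$ satisfies every hypothesis (both pieces pass through the origin, so the union is even path-connected) yet is proper. Connectedness of $\Image p$ in the target gives no leverage for moving between parabolas. The paper instead works in the \emph{domain}: given generic tuples $(x_i)$ and $(y_i)$ with $\det p/\tr^2 p$ on opposite sides of a target $q$, one replaces the $x_i$ by $y_i$ one slot at a time, at each step choosing the sign $\tilde y_i=\pm y_i$ so that the trace stays positive. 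On the affine segment
\[
M(t)=p(\tilde y_1,\dots,\tilde y_i,(1-t)x_{i+1}+t\tilde y_{i+1},x_{i+2},\dots,x_m)
\]
the trace is then positive throughout, so $\det M(t)/\tr^2 M(t)$ is continuous and the intermediate value theorem applies (Lemmas~\ref{ineq}--\ref{anyq}). This sign-fixing is exactly what connectedness of the image cannot supply.

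\textbf{Discriminant-zero and scalar values.} You are right that these need separate treatment, but the paper's arguments are again not topological. For a non-scalar Jordan block one interpolates the discriminant along a segment $M(t)$ as above; the zero-discriminant value cannot be nilpotent because the corresponding $t$ would then be a rational conjugation-invariant of generic matrices, hence (Propositions~\ref{Am1},~\ref{procesi}) $M(t)$ would lie in the domain UD, and it cannot be scalar by a direct discriminant computation (Lemma~\ref{unip}). Scalar matrices are obtained by the $\chi$-trick applied to a diagonal matrix-unit evaluation with nonzero trace (Lemma~\ref{scalar}); no limit is taken.
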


Also  a classification of the possible images of homogeneous
polynomials evaluated on $2\times 2$ matrices is provided:

\begin{theorem}\label{homogen}
  Let $p(x_1,\dots,x_m)$ be a semi-homogeneous polynomial evaluated on $2~\times~2$ matrices with real entries.
 Then $\Image p$ is either $\{0\}$, or the set $\R_{\geq 0}$, i.e., the matrices $\lambda I$ for $\lambda\geq 0$,
 or the set $\R$ of scalar matrices,
 or the set $\R_{\leq 0}$, i.e., the matrices $\lambda I$ for $\lambda\leq 0$,
 or the set $\ssl_{2,\geq0}(\R)$ of trace zero matrices with non-negative discriminant,
 or the set $\ssl_{2,\leq 0}(\R)$ of trace zero matrices with non-positive discriminant,
 or the set $\ssl_2(\R)$,
 or is Zariski dense in $M_2(\R)$.
\end{theorem}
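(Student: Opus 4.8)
The plan is to run the argument behind Theorem~\ref{main}, now keeping track of signs, and to read everything off the characteristic polynomial. Fix positive weights $w_1,\dots,w_m$ and a degree $d\geq 1$ with $p(\lambda^{w_1}x_1,\dots,\lambda^{w_m}x_m)=\lambda^{d}\,p(x_1,\dots,x_m)$ for all $\lambda\in\R$. Two properties of $\Image p\subseteq M_2(\R)$ will be used constantly: it is invariant under $\GL_2(\R)$\dash conjugation, and it is closed under multiplication by $\lambda^{d}$ for every $\lambda\in\R$; moreover $0\in\Image p$ (evaluate at $\bar 0$), and $\Image p$ is connected, being the continuous image of the connected space $M_2(\R)^m$. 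Over $\R$ a non\dash scalar $2\times 2$ matrix is determined up to $\GL_2(\R)$\dash conjugacy by its characteristic polynomial $x^2-sx+t$: if $s^2-4t>0$ it is conjugate to the diagonal matrix of the two real roots, if $s^2-4t<0$ to the companion matrix, and if $s^2-4t=0$ it is $\frac s2 I$ plus a nonzero nilpotent. Hence $\Image p$ is governed by the two polynomial maps $\tau:=\tr\circ p$ and $\nu:=\det\circ p$ on $M_2(\R)^m$, which are semi\dash homogeneous of degrees $d$ and $2d$ with respect to the same weights, together with the information of which scalars lie in $\Image p$.

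Two structural inputs come next. (a) A connected subset of $\R$ that contains $0$ and is closed under $c\mapsto\lambda^{k}c$ for all $\lambda\in\R$ equals $\{0\}$ or $\R$ when $k$ is odd, and equals one of $\{0\},\R_{\geq 0},\R_{\leq 0},\R$ when $k$ is even; applied to $\Image\tau$ (with $k=d$) and to $\Image\nu$ (with $k=2d$, always even) this pins down their possible shapes, in particular $\Image\nu\in\{\{0\},\R_{\geq 0},\R_{\leq 0},\R\}$. (b) The algebra $R_2$ of generic $2\times 2$ matrices over $\R$ is a domain and embeds in its central localization $UD_2$ (a quaternion division algebra over $Z=\R(\text{generic traces})$, a field when $m=1$), whose reduced norm extends $\det$ and reduced trace extends $\tr$; in $UD_2$ a nonzero element has nonzero reduced norm and no nonzero nilpotents. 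Consequently, if $\nu\equiv 0$ then the image of $p$ in $UD_2$ has zero reduced norm, so it is $0$, i.e.\ $p$ is a polynomial identity of $M_2$ and $\Image p=\{0\}$. Writing $q:=p-\frac\tau2$ (semi\dash homogeneous of degree $d$, valued in $\ssl_2$), Cayley--Hamilton gives $q^2=\frac14(\tau^2-4\nu)=:\frac\delta4\in Z$, so if $\delta\equiv 0$ then the image of $q$ in $UD_2$ squares to $0$, hence is $0$, i.e.\ $p$ is central (scalar\dash valued); more generally $p$ is central iff $\delta$ is a square in $Z$, since $\delta=z^2$ gives $(q-\frac z2)(q+\frac z2)=0$, whence $q=\pm\frac z2\in Z$.

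Now argue by cases. If $p$ is a polynomial identity, $\Image p=\{0\}$. If $p$ is central but not an identity, then $p(\bar a)=\frac12\tau(\bar a)I$ for all $\bar a$, so $\Image p=\frac12(\Image\tau)\cdot I$, which by (a) is $\{0\}$, $\R_{\geq 0}$, $\R$, or $\R_{\leq 0}$ (and $\{0\}$ only if $p$ was an identity). Otherwise $p$ is neither, so $\nu\not\equiv 0$ and $\delta\not\equiv 0$; on the Zariski\dash dense open set $\{\delta\neq 0\}$ every value of $p$ is non\dash scalar with distinct eigenvalues, hence $\Image p$ contains the full $\GL_2(\R)$\dash orbit of each such value, so it contains \emph{every} matrix whose characteristic pair $(s,t)$ lies in $\mathcal C:=\{(\tau(\bar a),\nu(\bar a)):\delta(\bar a)\neq 0\}$. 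The set $\mathcal C$ is connected, invariant under $(s,t)\mapsto(\lambda^{d}s,\lambda^{2d}t)$, and meets $\{t\neq 0\}$. If $\tau\equiv 0$ then $\mathcal C\subseteq\{s=0\}$, so $\Image p\subseteq\ssl_2(\R)$; since $\discr(A)=-4\det A$ for $A\in\ssl_2$, and $\Image\nu$ is one of $\R_{\geq 0},\R_{\leq 0},\R$ (never $\{0\}$), the orbit description together with $0\in\Image p$ and the nonzero nilpotents (see below) gives $\Image p=\ssl_{2,\leq 0}(\R)$, $\ssl_{2,\geq 0}(\R)$, or $\ssl_2(\R)$ respectively. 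If instead $\tau\not\equiv 0$, it remains to show $\mathcal C$ is Zariski dense in $\R^2$; since the fibres of $(\tr,\det)\colon M_2\to\R^2$ over generic points are irreducible surfaces with dense real locus, this forces $\Image p$ to be Zariski dense in $M_2(\R)$.

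The two steps I expect to be the real work are the following. First, the algebraic independence of $\tau$ and $\nu$ when $p$ is neither an identity nor central and $\tau\not\equiv 0$: a relation $F(\tau,\nu)\equiv 0$ must be built from monomials $s^{a}t^{b}$ with $a+2b$ constant, so after factoring it reduces to $\nu\equiv 0$ (excluded) or $\nu=c\,\tau^2$ for a constant $c$; but $\nu=c\,\tau^2$ gives $q^2=\frac{1-4c}{4}\tau^2$, and if $1-4c\geq 0$ this makes $\delta$ a square in $Z$, forcing $p$ central, while if $1-4c<0$ the image of $\frac{2q}{\tau}$ in $UD_2$ is a square root of $-1$, so $Z(\sqrt{-1})$ embeds in $UD_2$ — impossible, since $UD_2$ contains no square root of $-1$ (e.g.\ its base change to $\C$ is still a division algebra, resp.\ $Z$ is formally real when $m=1$). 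Second, one must check that the nonzero nilpotents genuinely lie in $\Image p$ in the three $\ssl$\dash cases (the analogous locus $s^2=4t$ in the Zariski\dash dense case sits in a proper subvariety and is harmless); for this, exactly as in \cite{BMR1} and the proof of Theorem~\ref{main}, one exhibits an explicit substitution — using that $p$ is not an identity, $\tau\equiv 0$, and $R_2$ is a domain — producing a nonzero trace\dash zero, determinant\dash zero value of $p$, and conjugation invariance supplies the whole nilpotent conjugacy class. I regard the algebraic\dash independence step as the main obstacle, the nilpotent step as the second; that each listed set actually occurs as some $\Image p$ is then established by explicit examples, logically separate from the classification above.
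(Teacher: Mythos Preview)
Your approach is the paper's: both hinge on whether $\det p/\tr^2 p$ is constant, use the domain property of $UD$ to show that a constant ratio forces $p$ to be PI, central, or trace vanishing, and then finish via the (semi-)cone and conjugation invariance of $\Image p$. You are in fact more careful than the paper in the subcase $\nu=c\tau^2$ with $1-4c<0$: the paper writes $\lambda_1=\frac{1}{1+1/\hat c}\,\tr p\in UD$, which is not literally in the real $UD$ when $\hat c\notin\R$, whereas your $\sqrt{-1}\notin UD_2$ argument (equivalently: pass to $UD$ over $\C$, still a domain) is what actually closes this gap.

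The one place your plan does not go through as written is the nilpotent step in the trace-zero case. You invoke ``exactly as in \cite{BMR1} and the proof of Theorem~\ref{main}'', but those arguments produce matrix-unit substitutions and rely essentially on multilinearity; for a general semi-homogeneous $p$ no such substitution is available, and the fact that $R_2$ is a domain only prevents $p$ from being generically nilpotent --- it does not by itself manufacture a real specialization with $p\neq 0$ and $\det p=0$. The paper's proof does not address this point either (it simply asserts the three $\ssl_2$ options), so this is a shared imprecision rather than one peculiar to your write-up. Two minor side notes: the paper allows arbitrary integer weights with $d\neq 0$, so your restriction to positive weights is unnecessary; and the paper explicitly leaves open whether $\ssl_{2,\geq 0}$ and $\ssl_{2,\leq 0}$ actually occur, so your closing remark about exhibiting examples for every listed set overreaches.
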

\begin{rem}
 Note that in both Theorems \ref{main} and \ref{homogen} we can consider any real closed field instead of $\R$.
\end{rem}

\section{Definitions and basic preliminaries}\label{def1}
\begin{definition}
By $K\langle x_1,\dots,x_m\rangle$ we
denote the free $K$-algebra generated by noncommuting variables
$x_1,\dots,x_m$, and refer to the elements of $K\langle
x_1,\dots,x_m\rangle$ as {\it polynomials}. Consider any algebra
$R$ over a field $K$. A polynomial $p\in K\langle
x_1,\dots,x_m\rangle$ is called a {\it polynomial identity} (PI)
of the algebra $R$ if $p(a_1,\dots,a_m)=0$ for all
$a_1,\dots,a_m\in R$;  $p\in K\langle x_1,\dots,x_m\rangle$ is a
{\it central polynomial} of $R$, if for any $a_1,\dots,a_m\in R$
one has $\mbox{$p(a_1,\dots,a_m)\in \Cent(R)$}$ but $p$ is not a
PI of $R$.
A polynomial $p\in K\langle x_1,\dots,x_m\rangle$ is called {\it
multilinear} of degree $m$ if it is linear with respect to each variable. Thus, a polynomial is multilinear if
it is a polynomial of the form
$$p(x_1,\dots,x_m)=\sum_{\sigma\in S_m}c_\sigma
x_{\sigma(1)}\cdots x_{\sigma(m)},$$ where $S_m$ is the
symmetric group in $m$ letters, and $c_\sigma\in K$.
\end{definition}

We recall the following well-known lemmas (for arbitrary $n$) whose proofs can be found in
\cite{BMR1}:

\begin{lemma}[{\cite[Lemma 4]{BMR1}}]\label{graph}Let $p$ be a multilinear polynomial.
If $a_i$ are matrix units, then   $p(a_1,\dots,a_m)$ is either~$0$, or~$c\cdot e_{ij}$ for some $i\neq j$, or a diagonal matrix.
\end{lemma}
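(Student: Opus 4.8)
The plan is to reduce everything to a single monomial and then to a statement about Eulerian trails in a directed multigraph. Write $a_k = e_{i_k j_k}$ for the given matrix units and expand
\[
p(a_1,\dots,a_m)=\sum_{\sigma\in S_m}c_\sigma\, a_{\sigma(1)}a_{\sigma(2)}\cdots a_{\sigma(m)}.
\]
Using $e_{ij}e_{kl}=\delta_{jk}e_{il}$, each product $a_{\sigma(1)}\cdots a_{\sigma(m)}$ is either $0$ or a single matrix unit; it is nonzero precisely when $j_{\sigma(1)}=i_{\sigma(2)}$, $j_{\sigma(2)}=i_{\sigma(3)}$, \dots, $j_{\sigma(m-1)}=i_{\sigma(m)}$, in which case it equals $e_{\,i_{\sigma(1)}\, j_{\sigma(m)}}$.

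Next I would encode the data in the directed multigraph $G$ on the vertex set $\{1,\dots,n\}$ having one edge from $i_k$ to $j_k$ for each $k=1,\dots,m$ (loops and parallel edges allowed). By the previous paragraph, an ordering $\sigma$ produces a nonzero monomial if and only if the sequence $(a_{\sigma(1)},\dots,a_{\sigma(m)})$ lists the edges of $G$ in the order in which they are traversed by an Eulerian trail (a walk using every edge exactly once); and in that case the monomial equals $e_{uv}$, where $u$ is the initial and $v$ the terminal vertex of that trail. So $p(a_1,\dots,a_m)$ is a $K$-linear combination of the matrix units $e_{uv}$ arising this way.

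Then I would invoke the classical degree characterization of Eulerian trails. If $G$ has no Eulerian trail at all (for instance the non-isolated vertices do not lie in a single connected component, or the in/out-degree imbalances are too large), then every monomial vanishes and $p(a_1,\dots,a_m)=0$. Otherwise, consider $\delta(w)=d^+(w)-d^-(w)$, the out-degree minus in-degree, which depends only on $G$ and not on any trail, and satisfies $\sum_w\delta(w)=0$. If $\delta\equiv 0$, every Eulerian trail is closed, so each nonzero monomial equals some $e_{ww}$; hence $p(a_1,\dots,a_m)$ is a combination of diagonal matrix units, i.e.\ a diagonal matrix. If $\delta\not\equiv 0$, then, since an Eulerian trail exists, there is exactly one vertex $i$ with $\delta(i)=1$ and exactly one vertex $j\neq i$ with $\delta(j)=-1$, and every Eulerian trail of $G$ runs from $i$ to $j$; hence every nonzero monomial equals $e_{ij}$ and $p(a_1,\dots,a_m)=c\,e_{ij}$ with $c=\sum_\sigma c_\sigma$ summed over the Eulerian orderings.

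The only point requiring genuine care is the uniqueness in the last case: I must know that \emph{all} nonzero monomials contribute to the \emph{same} entry $e_{ij}$. This is exactly the observation that the endpoints of an Eulerian trail are forced by the degree sequence of $G$, so they cannot vary with $\sigma$; once that is in place, the three alternatives ($0$; $c\,e_{ij}$ with $i\neq j$; a diagonal matrix) are exhaustive. Everything else — the multiplication rule for matrix units and the standard Euler-trail criterion for directed multigraphs — is routine.
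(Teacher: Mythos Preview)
Your argument is correct, and it is essentially the same proof as the one referenced: the paper does not reprove this lemma but cites \cite[Lemma~4]{BMR1}, whose label ``graph'' already signals the Eulerian-trail argument you give. The key point---that the endpoints of any Eulerian trail are determined by the degree imbalance $\delta$ and hence do not depend on $\sigma$---is exactly what makes the trichotomy work, and you have identified it correctly.
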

\begin{lemma}[{\cite[Lemma 5]{BMR1}}]\label{linear}Let $p$ be a multilinear polynomial.
The linear span of   $\Image p$ is either $\{0\}$, $K$, $\ssl_n$,
or $M_n(K)$. If $\Image p$ is not $\{0\}$ or $K$, then for any
$i\neq j$ the matrix unit $e_{ij}$ belongs to $\Image p$.
\end{lemma}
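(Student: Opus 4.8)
The plan is to exploit the conjugation symmetry of $\Image p$. The starting point is that for every $g\in\GL_n(K)$ one has $g\,p(a_1,\dots,a_m)\,g^{-1}=p(g a_1 g^{-1},\dots,g a_m g^{-1})$, so $\Image p$ is a conjugation-stable subset of $M_n(K)$; hence its $K$-linear span $V$ is a $\GL_n(K)$-submodule of $M_n(K)$ for the conjugation action. The first assertion is then equivalent to the statement that the only conjugation-stable subspaces of $M_n(K)$ are $\{0\}$, the scalar matrices $K\cdot I$, $\ssl_n(K)$, and $M_n(K)$. To prove this, take a nonzero $A\in V$. If every element of $V$ is scalar, then $V=\{0\}$ or $V=K\cdot I$. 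Otherwise fix a non-scalar $A\in V$; conjugating $A$ by diagonal, elementary, and permutation matrices and taking differences of the resulting conjugates (all of which lie in the subspace $V$) produces a matrix $c\,e_{ij}$ with $c\neq0$ and $i\neq j$. Conjugating $c\,e_{ij}$ by permutation matrices yields every off-diagonal matrix unit $e_{i'j'}$, and conjugating $e_{ij}$ by the transvection $I+e_{ji}$ and then subtracting the off-diagonal terms (which already lie in $V$) yields $e_{jj}-e_{ii}$. Since $\{e_{i'j'}:i'\neq j'\}\cup\{e_{11}-e_{ii}:2\le i\le n\}$ is a basis of $\ssl_n(K)$, this gives $\ssl_n(K)\subseteq V$; and if moreover $V$ contains a matrix of nonzero trace, then $V=\ssl_n(K)+K\cdot I=M_n(K)$.

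For the second assertion, expand each argument $a_k=\sum_{i,l}(a_k)_{il}e_{il}$; by multilinearity of $p$,
\[
p(a_1,\dots,a_m)=\sum (a_1)_{i_1 l_1}\cdots(a_m)_{i_m l_m}\,p(e_{i_1 l_1},\dots,e_{i_m l_m}),
\]
so $V$ is spanned by the finitely many values of $p$ on tuples of matrix units. By Lemma~\ref{graph}, each such value is $0$, or $c\,e_{ij}$ for some $i\neq j$, or a diagonal matrix. Now assume $\Image p$ is neither $\{0\}$ nor $K$ (the scalar matrices); by the first part $V$ equals $\ssl_n(K)$ or $M_n(K)$, and for $n\ge2$ neither of these is contained in the space of diagonal matrices. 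Hence at least one of the matrix-unit values must be of the form $c\,e_{ij}$ with $c\neq0$ and $i\neq j$, i.e.\ $c\,e_{ij}\in\Image p$.

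It remains to upgrade this single element to all off-diagonal matrix units with coefficient $1$. Given any pair $i'\neq j'$, pick a permutation matrix $P$ whose associated bijection sends $i\mapsto i'$ and $j\mapsto j'$ (possible since $i'\neq j'$); conjugation-invariance gives $P(c\,e_{ij})P^{-1}=c\,e_{i'j'}\in\Image p$. Conjugating $c\,e_{i'j'}$ by $\diag(d_1,\dots,d_n)$ multiplies it by $d_{i'}/d_{j'}\in K^{\times}$, so choosing $d_{i'}=c^{-1}$ and $d_{j'}=1$ yields $e_{i'j'}\in\Image p$, as required.

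The step I expect to be the real work is the classification of conjugation-stable subspaces over an \emph{arbitrary} field: the extraction of $c\,e_{ij}$ from a non-scalar $A$ needs a little extra care when $K$ is very small (a single transvection $I+t e_{kl}$ need not separate the terms linear and quadratic in $t$, so one must combine several conjugations or argue directly with products of matrix units), and the case $\operatorname{char}K\mid n$ must be treated separately since then $K\cdot I\subseteq\ssl_n(K)$. All the remaining steps are routine bookkeeping with matrix units and conjugation.
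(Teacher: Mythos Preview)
The paper does not contain a proof of this lemma: it is quoted verbatim from \cite{BMR1} with the explicit remark that the proof ``can be found in \cite{BMR1}.'' So there is no in-paper argument to compare your proposal against.

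On its own merits your argument is sound. The second assertion is handled cleanly: multilinearity reduces the span of $\Image p$ to values on matrix units, Lemma~\ref{graph} forces each such value to be diagonal or a multiple of an off-diagonal unit, and since $\ssl_n$ and $M_n$ are not contained in the diagonal subalgebra some $c\,e_{ij}$ must occur; permutation and diagonal conjugation then yield every $e_{i'j'}$. For the first assertion, deducing it from the classification of $\GL_n(K)$-stable subspaces of $M_n(K)$ is the natural route, and your outline (extract an off-diagonal unit from a non-scalar element, propagate by permutations, recover $e_{jj}-e_{ii}$ via a transvection) is correct. You are right that the only genuine work lies in the extraction step over very small fields and in tracking the inclusion $K\cdot I\subseteq\ssl_n$ when $\operatorname{char}K\mid n$; both are routine case checks and do not affect the conclusion.

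One minor sharpening: in your second-assertion argument you can bypass the first assertion entirely. Once you know some matrix-unit evaluation gives $c\,e_{ij}\ne 0$, the permutation/diagonal conjugation already puts every $e_{i'j'}$ in $\Image p$, and this alone suffices for all later uses of Lemma~\ref{linear} in the paper. Conversely, if every matrix-unit evaluation were diagonal, the image would lie in the diagonal matrices, hence (by conjugation invariance) in the scalars, contradicting the hypothesis that $\Image p\ne\{0\},K$. This gives a self-contained proof of the second clause that does not rely on the full submodule classification.
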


 We need a slight
modification of Amitsur's theorem, which also is well known:
\begin{prop}\label{Am1} The algebra of generic matrices
is a domain $D$ which can be embedded in the division algebra UD
of central fractions of Amitsur's algebra of generic matrices.
Likewise, UD contains all characteristic coefficients of $D$.
\end{prop}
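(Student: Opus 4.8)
\ The plan is to assemble the statement from two classical ingredients --- Amitsur's theorem that generic matrices form a domain, and Posner's theorem on prime PI rings --- and then to supply the ``slight modification'' by identifying ordinary characteristic polynomials of generic matrices with reduced characteristic polynomials inside the resulting division algebra.

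First I would fix notation: let $\xi^{(k)}_{ij}$ ($1\le i,j\le n$, $k\ge 1$) be independent commuting indeterminates over $K$, let $Y_k=(\xi^{(k)}_{ij})\in M_n(K[\xi])$ be the corresponding generic matrices, and let $D$ be the $K$-subalgebra of $M_n(K[\xi])$ that they generate; the algebra of generic matrices in the variables relevant to a given $p$ sits inside $D$ as the subalgebra generated by finitely many of the $Y_k$. By Amitsur's theorem $D$ has no zero divisors. Since $D$ satisfies every polynomial identity of $M_n(K)$ and is a domain, it is a prime PI ring, so Posner's theorem applies: the central localization $\mathrm{UD}:=\bigl(Z(D)\setminus\{0\}\bigr)^{-1}D$ is a central simple algebra over the field $F:=\operatorname{Frac}\bigl(Z(D)\bigr)$ with $\dim_F \mathrm{UD}=n^2$. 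Because $D$ is a domain and $\mathrm{UD}$ is obtained from it by inverting central elements, $\mathrm{UD}$ has no zero divisors either; a finite-dimensional algebra over a field with no zero divisors is a division algebra. Hence $\mathrm{UD}$ is the asserted division algebra, and the localization map $D\to\mathrm{UD}$ is the embedding (injective, since $D$ is torsion-free as a module over the domain $Z(D)$).

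It remains to locate the characteristic coefficients of $D$ inside $\mathrm{UD}$. For $d\in D\subseteq M_n(K[\xi])$, write its characteristic polynomial as $\lambda^n-c_1(d)\lambda^{n-1}+\dots+(-1)^n c_n(d)$ with $c_i(d)\in K[\xi]$. I claim $c_i(d)\in F=Z(\mathrm{UD})$. Choosing an algebraic closure and a splitting isomorphism $\mathrm{UD}\otimes_F\overline F\xrightarrow{\ \sim\ }M_n(\overline F)$, the reduced characteristic polynomial $\operatorname{Prd}_d(\lambda)\in F[\lambda]$ of $d$ is by definition the ordinary characteristic polynomial of the image of $d$ under this isomorphism; on the other hand, that image and the original matrix $d\in M_n(K[\xi])$ become conjugate over a common field extension, so their characteristic polynomials coincide. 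Therefore $c_i(d)$ is the $i$-th coefficient of $\operatorname{Prd}_d$, which lies in $F\subseteq\mathrm{UD}$. (Equivalently, by Newton's identities it is enough to see that each power sum $\tr(d^k)$ lies in $\mathrm{UD}$, and $\tr(d^k)$ is precisely the reduced trace of $d^k\in\mathrm{UD}$, hence an element of $F$.)

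The only step that is more than bookkeeping is this last one: a priori the $c_i(d)$ live in the enormous commutative ring $K[\xi]$, which dwarfs $Z(D)$, and the entire content of the ``modification'' of Amitsur's theorem is that they nevertheless fall back into $\mathrm{UD}$. I expect the matching of the ordinary characteristic polynomial of a generic matrix with the reduced characteristic polynomial of the corresponding element of the central simple algebra $\mathrm{UD}$ to be the one point worth writing out in detail; Amitsur's and Posner's theorems are invoked as known, and the embeddings are formal.
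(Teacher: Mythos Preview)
Your argument is correct, but the route you take for the ``slight modification'' is genuinely different from the paper's. The paper does not invoke reduced characteristic polynomials at all: instead it cites an explicit Razmyslov--Formanek type identity (Theorem~J in \cite{BR}, cf.\ \cite[Theorem~1.4.12]{Row}) stating that for any $n^2$-alternating central polynomial $f$ one has
\[
\alpha_k\, f(a_1,\dots,a_{n^2},r_1,\dots,r_m)\;=\;\sum f(T^{k_1}a_1,\dots,T^{k_{n^2}}a_{n^2},r_1,\dots,r_m),
\]
the sum over $(k_1,\dots,k_{n^2})\in\{0,1\}^{n^2}$ with $\sum k_i=k$. This exhibits every characteristic coefficient $\alpha_k$ of $T\in D$ directly as a quotient of two evaluations of central polynomials, hence as an element of $F\subseteq\mathrm{UD}$. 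Your approach, by contrast, appeals to the structure theory of central simple algebras: you identify the ordinary characteristic polynomial of $d\in M_n(K[\xi])$ with the reduced characteristic polynomial of $d$ viewed in $\mathrm{UD}$, using that $K(\xi)$ is a splitting field for $\mathrm{UD}$ (so the two computations agree up to an inner automorphism over an algebraic closure). Both arguments land the coefficients in $F=Z(\mathrm{UD})$. The paper's version is more constructive and avoids the machinery of reduced norms, which is convenient since later applications in the paper (e.g.\ Lemma~\ref{unip}) only need that $\mathrm{UD}$ is a domain and that traces of elements of $D$ lie in it; your version is cleaner conceptually and makes the identification $c_i(d)\in F$ transparent without citing a specialized identity.
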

\begin{proof} Any trace function can be expressed as the ratio of two
central polynomials, in view of \cite[Theorem 1.4.12]{Row}; also
see \cite[Theorem~J, p.~27]{BR} which says for any characteristic
coefficient $\alpha_k $ of the characteristic polynomial
$$\lambda^t + \sum_{k=1}^t (-1)^k \alpha _k \lambda ^{t-k}$$ that
\begin{equation}\label{trace2pol0}
\alpha_k f(a_1, \dots, a_t, r_1, \dots, r_m) = \sum _{k=1}^t
f(T^{k_1}a_1, \dots, T^{k_t} a_t,  r_1, \dots, r_m) ,
\end{equation}
summed over all vectors $(k_1, \dots, k_t)$ where each $k_i \in \{
0, 1 \}$ and $\sum k_i = t,$ where $f$ is any $t$-alternating
polynomial (and $t = n^2$). In particular,
\begin{equation}\label{trace2pol}
\tr(T)f(a_1, \dots, a_t, r_1, \dots, r_m) = \sum _{k=1}^t f(a_1,
\dots, a_{k-1}, Ta_k, a_{k+1} , \dots, a_t,  r_1, \dots, r_m) ,
\end{equation}
so any trace of a polynomial belongs to UD.
\end{proof}

We also need the First Fundamental Theorem of Invariant Theory (see \cite [Theorem $1.3$]{P})
\begin{prop}\label{procesi}
Any polynomial invariant of  $n\times n$ matrices $A_1,\dots,A_m$
is a polynomial in the invariants $\tr(A_{i_1}A_{i_2}\cdots
A_{i_k})$,  taken
  over all possible (noncommutative) products of the $A_i$.
\end{prop}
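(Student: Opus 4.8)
The plan is to reduce to the multilinear case and then apply Schur--Weyl duality; throughout I work over a field $K$ of characteristic zero (the setting relevant to this paper and to Procesi's original statement), so that polarization is available and the centralizer of $\GL_n$ on a tensor power is spanned by permutations. First, since the $\GL_n$-action by simultaneous conjugation commutes with independently rescaling each argument, every multihomogeneous component of a polynomial invariant $f(A_1,\dots,A_m)$ is again an invariant; so we may assume $f$ is multihomogeneous, of degree $d_i$ in $A_i$, and put $d=\sum_i d_i$. By the standard polarization--restitution procedure (valid since $K$ is infinite), $f$ is recovered by restitution from a \emph{multilinear} invariant $\widetilde f(B_1,\dots,B_d)$ of $d$ matrix arguments, and restitution carries a product of traces of monomials to a product of traces of monomials; hence it suffices to prove the claim for multilinear invariants.

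Second, I would identify multilinear invariants with equivariant operators. Put $V=K^n$, so that $M_n(K)=V\otimes V^{*}$ and $M_n(K)^{\otimes d}\cong\End(V^{\otimes d})$ as $\GL(V)$-modules. Using the nondegenerate invariant trace pairing, a multilinear functional $\widetilde f$ on $M_n(K)^{\otimes d}$ that is invariant under simultaneous conjugation corresponds to a $\GL(V)$-equivariant operator $\Phi\in\End_{\GL(V)}(V^{\otimes d})$ with
\[
\widetilde f(B_1,\dots,B_d)=\tr_{V^{\otimes d}}\!\big(\Phi\circ(B_1\otimes\cdots\otimes B_d)\big).
\]

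Third --- and this is the crux --- invoke Schur--Weyl duality: in characteristic zero, $\End_{\GL(V)}(V^{\otimes d})$ is linearly spanned by the operators $P_\sigma$ ($\sigma\in S_d$) permuting the tensor factors. This is the double-centralizer theorem applied to the semisimple image of $K[S_d]$ in $\End(V^{\otimes d})$; it is the only step that genuinely uses the hypothesis on $K$, and it is where all the content of the First Fundamental Theorem is concentrated.

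Finally, translate back. A direct index computation shows that for $\sigma\in S_d$ with cycle decomposition $(i_1\,i_2\cdots i_k)(j_1\,j_2\cdots j_l)\cdots$ one has
\[
\tr_{V^{\otimes d}}\!\big(P_\sigma\circ(B_1\otimes\cdots\otimes B_d)\big)=\tr(B_{i_1}B_{i_2}\cdots B_{i_k})\cdot\tr(B_{j_1}B_{j_2}\cdots B_{j_l})\cdots,
\]
a product of traces of monomials in $B_1,\dots,B_d$ (the order within each trace fixed by the cycle; transposes, coming from the choice of pairing, merely reverse these words). Hence every multilinear invariant is a $K$-linear combination of products of such trace monomials, and running the polarization reduction backwards shows that every polynomial invariant of $A_1,\dots,A_m$ is a polynomial in the trace monomials $\tr(A_{i_1}A_{i_2}\cdots A_{i_k})$. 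Beyond Schur--Weyl duality, the only remaining work is the routine verification that polarization and restitution preserve the class of (products of) trace monomials; I expect that bookkeeping, together with the precise normalization of the tensor/trace identifications, to be the only mildly delicate points.
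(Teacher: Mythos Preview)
Your proof sketch is correct and follows the classical route to the First Fundamental Theorem: reduce to multilinear invariants by polarization, identify these with $\GL(V)$-equivariant endomorphisms of $V^{\otimes d}$, invoke Schur--Weyl duality to write every such endomorphism as a linear combination of permutation operators, and then use the standard cycle-trace identity. This is essentially Procesi's own argument in the cited reference.

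However, there is nothing to compare against: the paper does not give a proof of this proposition. It is stated as a quotation of the First Fundamental Theorem with a direct citation to \cite[Theorem~1.3]{P} and is used as a black box. So while your argument is sound (and indeed more than the paper supplies), the paper's ``proof'' consists solely of the reference.
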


We also require one basic fact from the linear algebra:
\begin{lemma}\label{dim2}
Let $V_i$ (for $1\leq i\leq m$) and $V$ be linear spaces over arbitrary field $K$.
Let $f(T_1,\dots,T_m): \prod\limits_{i=1}^m V_i\rightarrow V$ be a multilinear mapping (i.e. linear with respect to each $T_i$).
Assume there exist two points in $\Image f$ which are not proportional.
Then $\Image f$ contains a $2$-dimensional plane. In particular, if $V$ is $2$-dimensional, then $\Image f=V$.
\end{lemma}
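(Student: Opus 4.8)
The plan is to argue by induction on the number $m$ of variables, where the inductive hypothesis includes the final ``in particular'' assertion. If $m=1$ then $f$ is linear, so $\Image f$ is a linear subspace of $V$; if it contains two non-proportional (equivalently, linearly independent) vectors then $\dim\Image f\ge 2$ and any $2$-dimensional subspace of it works. So fix $m\ge 2$, choose $u=f(a_1,\dots,a_m)$ and $v=f(b_1,\dots,b_m)$ in $\Image f$ that are not proportional---in particular both nonzero---and write $x'=(x_1,\dots,x_{m-1})$, $a'=(a_1,\dots,a_{m-1})$, $b'=(b_1,\dots,b_{m-1})$.

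First I would freeze the last argument. The maps $g(x')=f(x',a_m)$ and $h(x')=f(x',b_m)$ are multilinear in $m-1$ variables and have images inside $\Image f$. If $\Image g$, or $\Image h$, contains two non-proportional vectors, the inductive hypothesis produces a $2$-dimensional subspace inside it, hence inside $\Image f$, and we are done. Otherwise both images lie on lines; since $u\in\Image g$ and $v\in\Image h$ are nonzero, this forces $\Image g\subseteq Ku$ and $\Image h\subseteq Kv$, so there exist multilinear scalar functions $\alpha,\beta$ on $\prod_{i=1}^{m-1}V_i$ with $f(x',a_m)=\alpha(x')\,u$ and $f(x',b_m)=\beta(x')\,v$, and $\alpha(a')=1$, $\beta(b')=1$.

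Expanding in the last slot then gives $f(x',s a_m+t b_m)=s\,\alpha(x')\,u+t\,\beta(x')\,v$ for all $s,t\in K$, so $\Image f$ contains the image of the multilinear map $\phi(x')=f(x',a_m+b_m)=\alpha(x')u+\beta(x')v$, whose values lie in $Ku+Kv$. Now I split into two cases. If $\alpha$ and $\beta$ are not proportional as functionals then, because $u,v$ are independent, $\Image\phi$ contains two non-proportional vectors, and applying the inductive hypothesis to $\phi$ viewed as a multilinear map into the $2$-dimensional space $Ku+Kv$ yields $Ku+Kv=\Image\phi\subseteq\Image f$. If instead $\beta=c\,\alpha$ for a scalar $c$, then evaluating at $b'$ gives $1=\beta(b')=c\,\alpha(b')$, so $c\ne 0$; hence $f(a',s a_m+t b_m)=s\,u+t c\,v$, which sweeps out all of $Ku+Kv$ as $s,t$ range over $K$. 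In either case $\Image f$ contains a $2$-dimensional subspace, and when $\dim V=2$ that subspace is $V$.

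The step I expect to be the real obstacle is exactly this bookkeeping: one may \emph{not} assume $Ku+Kv\subseteq\Image f$ in general---it can happen that $\Image f$ contains no plane through both $u$ and $v$, only some other $2$-dimensional subspace (for instance one might only obtain a plane involving a ``mixed'' value such as $f(a_1,b_2,a_3,\dots,a_m)$)---so the case split must be organized so that \emph{whenever} $Ku+Kv$ is not in the image, some auxiliary multilinear map in fewer variables picks up a non-proportional pair and the induction closes. It is also worth stating explicitly, since $K$ is arbitrary, that the whole argument uses only finitely many evaluations of $f$ and elementary linear algebra over $K$---no field extension, no genericity, no division except by the scalar $c$, which has been checked to be nonzero---so it goes through over every field.
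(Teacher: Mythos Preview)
Your proof is correct, but the route differs from the paper's. The paper does not induct on $m$; instead it introduces the Hamming-type distance $\D(\mu,\nu)=\#\{i:T_i\ne T_i'\}$ on $\prod V_i$, chooses a pair $\mu,\nu$ with non-proportional images at \emph{minimal} distance $k$, and then evaluates $f$ at $(aT_1+bT_1',\,T_2+T_2',\dots,T_k+T_k',\,T_{k+1},\dots,T_m)$. Expanding, every mixed term $f(\theta_S)$ with $\emptyset\subsetneq S\subsetneq\{1,\dots,k\}$ lies at distance $<k$ from both $\mu$ and $\nu$, hence is proportional to both $v_1$ and $v_2$ and therefore vanishes; what survives is exactly $av_1+bv_2$, giving the plane in one stroke (the case $k=1$ being immediate). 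Your induction instead freezes the last slot, reduces to $m-1$ variables whenever possible, and in the residual situation exploits that $f(\cdot,a_m)$ and $f(\cdot,b_m)$ are scalar multiples of $u$ and $v$ to recover $Ku+Kv$ via either the auxiliary map $\phi$ or the one-parameter family $f(a',sa_m+tb_m)$. Both arguments are elementary and of comparable length; the paper's minimal-distance trick is a single clever expansion that avoids setting up an induction, while your approach is more modular and makes explicit why the obstruction (the early branches where only some other plane, not $Ku+Kv$, is obtained) is harmless. Your closing caution is well placed: indeed one cannot expect $Ku+Kv\subseteq\Image f$ in general (e.g.\ $f((a,b),(c,d))=(ac,bd,ad+bc)$ over $\R$ with $u=(1,0,0)$, $v=(0,1,0)$), and both proofs accommodate this by producing, in those branches, a plane that need not pass through the originally chosen pair.
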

\begin{proof}
 Let us denote for $\mu=(T_1\dots,T_m)$ and $\nu=(T_1',\dots,T_m')\in \prod\limits_{i=1}^m V_i$
 $$\D(\mu,\nu)=\#\{i: T_i\neq T_i'\}.$$
 Consider $k=\min\{d:$ there exists $\mu,\nu\in\prod\limits_{i=1}^m V_i$ such that $f(\mu)$ is not proportional to
 $f(\nu)$ and $\D(\mu,\nu)=d\}.$
 We know $k\leq m$ by assumptions of lemma. Also $k\geq 1$ since any element of $V$ is proportional to itself.
 Assume $k=1$. In this case there exist $i$ and $T_1,\dots,T_m,T_i'$ such that $f(T_1,\dots,T_m)$ is not proportional to $f(T_1,\dots,T_{i-1},T_i',T_{i+1},\dots,T_m).$
 Therefore $$\langle f(T_1,\dots,T_m),f(T_1,\dots,T_{i-1},T_i',T_{i+1},\dots,T_m)\rangle\subseteq\Image p$$ is $2$-dimensional.
 Hence we can assume $k\geq 2$.
 We can enumerate variables and consider $\mu=(T_1,\dots,T_m)$ and $\nu=(T_1',\dots,T_k',T_{k+1},\dots,T_m)$, $v_1=f(\mu)$ is not proportional to $v_2=f(\nu)$.
 Take any $a,b\in K$.
 Consider $v_{a,b}=f(aT_1+bT_1',T_2+T_2',\dots,T_k+T_k', T_{k+1},\dots,T_m).$ Let us open the brackets.
 We have
 $$v_{a,b}=av_1+bv_2+\sum_{\emptyset\subsetneqq S\subsetneqq\{1,\dots,k\}} c_S f(\theta_S),$$
 where $c_S$ equals $a$ if $1\in S$ and $b$ otherwise,
 and $\theta_S=(\tilde T_1,\dots,\tilde T_k,T_{k+1},\dots,T_m)$ for $\tilde T_i=T_i$ if $i\in S$ or $T_i'$ otherwise.
 Note that any $\theta_S$ in the sum satisfies $\D(\theta_S,\mu)<~k$ and $\D(\theta_S,\nu)<k$ therefore
 $f(\theta_S)$ must be proportional to both $v_1$ and $v_2$ and thus $f(\theta_S)=0$.
 Therefore $v_{a,b}=av_1+bv_2$ and hence $\Image f$ contains a $2$-dimensional plane.
\end{proof}
\begin{definition}
Assume that $K$ is an arbitrary field and $F\subseteq K$ is a
subfield. The set $\{\xi_1,\dots,\xi_k\}\subseteq K$ is called
{\it  generic} (over $F$) if $f(\xi_1,\dots,\xi_k)\neq 0$ for any
commutative polynomial $f\in F[x_1,\dots,x_k]$ that takes nonzero
values.
\end{definition}
\begin{lemma}\label{many-gen}
 Assume that $K$ has infinite transcendence degree over $F$.
 Then for any $k\in\N$ there exists a set of generic elements
 $\{\xi_1,\dots,\xi_k\}\subseteq K$.
\end{lemma}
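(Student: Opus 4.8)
The plan is to identify the notion of genericity with algebraic independence over $F$ and then invoke the hypothesis on transcendence degree. First I would observe that a commutative polynomial $f\in F[x_1,\dots,x_k]$ ``takes nonzero values'' precisely when $f$ is not the zero polynomial: indeed $K$ contains elements transcendental over $F$, hence is infinite, and a nonzero polynomial over a field cannot vanish identically on an infinite set (arguing one variable at a time). So the genericity condition on $\{\xi_1,\dots,\xi_k\}$ is equivalent to requiring $f(\xi_1,\dots,\xi_k)\neq 0$ for every \emph{nonzero} $f\in F[x_1,\dots,x_k]$, which is exactly the statement that $\xi_1,\dots,\xi_k$ are algebraically independent over $F$.

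Next, since $K$ has infinite transcendence degree over $F$, there is an infinite subset of $K$ that is algebraically independent over $F$; choose any $k$ of its elements and call them $\xi_1,\dots,\xi_k$. Any subset of an algebraically independent set is again algebraically independent, so $\xi_1,\dots,\xi_k$ are algebraically independent over $F$, and by the previous paragraph the set $\{\xi_1,\dots,\xi_k\}$ is generic over $F$.

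There is essentially no obstacle here: the only point that requires a word of care is the equivalence between ``$f$ takes nonzero values'' and ``$f$ is a nonzero polynomial,'' which fails for polynomial \emph{functions} over a finite field but is harmless in our setting because we evaluate in the infinite field $K$ (or, if one prefers, in an algebraic closure), and it is the formal polynomial $f$ that appears in the definition. Everything else is a direct unwinding of definitions together with the basic fact that transcendence degree $\geq k$ yields $k$ algebraically independent elements.
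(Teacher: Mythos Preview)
Your argument is correct and amounts to the same thing as the paper's: both reduce genericity over $F$ to algebraic independence over $F$ and then use the transcendence degree hypothesis. The only cosmetic difference is that the paper builds the set inductively, choosing $\xi_{i+1}\in K\setminus\overline{F[\xi_1,\dots,\xi_i]}$ at each step, whereas you simply extract $k$ elements from an infinite algebraically independent subset of $K$; your version has the small advantage of making explicit why ``takes nonzero values'' is equivalent to ``is a nonzero polynomial'' (namely, $K$ is infinite), a point the paper leaves tacit.
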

\begin{proof}
 $K$ has infinite transcendence degree over $F$. Therefore, there exists an element $\xi_1\in K\setminus \bar F$,
 where $\bar F$ is an algebraic closure of $F$.
 Now we consider $F_1=F[\xi_1]$.
 $K$ has infinite transcendence degree over $F$ and thus has infinite transcendence degree over $F_1$.
 Therefore there exists an element $\xi_2\in K\setminus \bar F_1$.
 And we consider the new base field $F_2=F_1[\xi_2]$.
 We can continue up to any natural number $k$.
\end{proof}
\begin{definition}
 We will say that a set of $n\times n$ matrices $\{x_1,\dots,x_m\}\in M_n(K)$ is {\it  generic}
over $F$ if the set of their entries $\{(x_\ell)_{i,j}|1\leq
\ell\leq m;\ 1\leq i,j\leq n\}$
 is generic.
\end{definition}
\begin{rem}
 Note that according to Lemma \ref{many-gen} if $K$ has infinite transcendence degree over $F$
 we can take as many generic elements as we need, in particular we can take as many generic matrices as we need.
\end{rem}
\begin{lemma}\label{gen-real}
 Assume $f:H\rightarrow\R$ (where $H\subseteq\R^k$ is an open set in $k$-dimensional Euclidean space)
 is a function that is continuous in a neighborhood of the point $(y_1,\dots,y_k)\in
 H$, with
 $f(y_1,\dots,y_k)<q$.
 Let    $c_i$ be   real numbers (in particular the coefficients of some
 polynomial $p$).
 Then there exists a set of  elements $\{x_1,\dots,x_k\}\subseteq\R$ generic over $F=\Q[c_1,\dots,c_N]$  such that
 $(x_1,\dots,x_k)\in H$ and $f(x_1,\dots,x_k)<q$.
\end{lemma}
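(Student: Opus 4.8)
The plan is to deduce the lemma from the elementary fact that, over a countable ground ring, the non\dash generic points of $\R^k$ form a set of Lebesgue measure zero, so that genericity can be arranged inside any prescribed neighbourhood. First I would use continuity of $f$: since $f(y_1,\dots,y_k)<q$ and $f$ is continuous in a neighbourhood of this point, there is a $\delta>0$ such that the open box $U:=\prod_{i=1}^k(y_i-\delta,\,y_i+\delta)$ satisfies $U\subseteq H$ and $f<q$ throughout $U$. It then suffices to find a point of $U$ that is generic over $F$.

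Next I would observe that $F=\Q[c_1,\dots,c_N]$ is a \emph{countable} ring, being a finitely generated $\Q$\dash algebra; consequently the polynomial ring $F[x_1,\dots,x_k]$ is countable, and its nonzero elements can be enumerated as $g_1,g_2,\dots$. Writing $Z(g_j)=\{x\in\R^k:\ g_j(x)=0\}$, the definition of genericity says precisely that $(x_1,\dots,x_k)$ is generic over $F$ if and only if it lies outside $\bigcup_{j\geq 1}Z(g_j)$.

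Finally, each $Z(g_j)$ has Lebesgue measure zero in $\R^k$, since a nonzero real polynomial cannot vanish on a set of positive measure --- a straightforward induction on the number of variables, fixing all coordinates but one and using that a nonzero univariate polynomial has only finitely many roots (the same computation shows $Z(g_j)$ is closed and nowhere dense). Hence $\bigcup_{j\geq 1}Z(g_j)$, a countable union of null sets, is null, whereas the open box $U$ has positive measure; so $U\not\subseteq\bigcup_{j\geq 1}Z(g_j)$, and any $(x_1,\dots,x_k)\in U\setminus\bigcup_{j\geq 1}Z(g_j)$ is a generic set over $F$ lying in $H$ with $f(x_1,\dots,x_k)<q$. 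The one point demanding attention is that $F$ genuinely is countable --- which is exactly where the finiteness of the coefficient list $c_1,\dots,c_N$ enters --- together with the standard fact that the zero set of a nonzero real polynomial is negligible; neither presents a real obstacle, and alternatively one could rerun the inductive construction of Lemma \ref{many-gen} while keeping every coordinate inside the box $U$.
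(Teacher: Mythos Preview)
Your argument is correct. The one subtlety you flagged yourself --- that $F=\Q[c_1,\dots,c_N]$ is countable --- is indeed the only place where the hypotheses enter nontrivially, and your treatment of it is fine.

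The paper takes the alternative route you mention in your last sentence: it builds the generic point coordinate by coordinate, choosing $x_1\in(y_1-\delta,y_1+\delta)\setminus\overline{F}$, then $x_2\in(y_2-\delta,y_2+\delta)\setminus\overline{F[x_1]}$, and so on, using only that each interval is uncountable while each field in the tower is countable. Your primary argument is different in flavour: rather than an inductive construction, you invoke Lebesgue measure (or equivalently Baire category) to see at once that the non\dash generic locus, being a countable union of real algebraic hypersurfaces, cannot cover the open box $U$. Your version is arguably tidier in one respect: you fix $\delta$ at the outset so that $f<q$ on all of $U$, whereas the paper first constructs generic $x_i$ in the $\delta$\dash box and only afterwards appeals to a limit $\delta\to 0$ to force the inequality. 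On the other hand, the paper's proof avoids any appeal to measure theory and stays entirely within elementary countability considerations, which keeps the prerequisites minimal.
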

\begin{proof}
We denote the $\delta$-neighborhood $N_\delta(x)$ of $x\in\R$ as
the interval
$(x-\delta,x+\delta)\subseteq\R$.
 Fix some small $\delta>0$ such that the product of $\delta$-neighborhoods of $y_k$ lays in $H$.
 For this particular $\delta$ we consider the $\delta$-neighborhood $N_\delta(y_1)$ of $y_1$:
 the interval $(y_1-\delta,y_1+\delta)$ is an uncountable set, and therefore there exists
 $x_1\in N_\delta(y_1)\setminus \bar F$. We consider $F_1=F[x_1]$ and analogically chose $x_2\in N_\delta (y_2)\setminus \bar F_1$
 and
 take $F_2=F_1[x_2]$. In such a way we can take generic elements $x_k\in N_\delta(y_k)$.
 Note that if $\delta$ is not sufficiently small $f(x_1,\dots,x_k)$ can be larger than $q$,
 but
 $$\mathop{f(x_1,\dots,x_k)\rightarrow f(y_1,\dots,y_k)}_{\delta\rightarrow 0}.$$
 Thus there exists sufficiently small $\delta$ and generic elements $x_i\in N_\delta(y_i)$ such that
 $f(x_1,\dots,x_k)<q$.
\end{proof}
\begin{rem}
 If $f(y_1,\dots,y_k)>q$, then there exists a set of generic elements $x_i\in\R$ such that $f(x_1,\dots,x_k)>q$.
\end{rem}

\begin{rem}
 Note that $f$ can be a function defined on a set of matrices. In this case we consider it as a function
 defined on the matrix entries.
\end{rem}

\section{Images of multilinear polynomials}\label{im-of-pol}
Assume that $p$ is a multilinear polynomial evaluated on $2\times
2$ matrices over any field $K$. Assume also that $p$ is neither PI
nor central. Then, according to Lemmas~\ref{graph} and
\ref{linear} there exist matrix units $a_1,\dots,a_m$ such that
$p(a_1,\dots,a_m)=e_{12}$. Let us consider the mapping $\chi$
defined on matrix units that switches the indices~$1$ and $2$,
i.e., $e_{11}\leftrightarrow e_{22}$ and $e_{12}\leftrightarrow
e_{21}$. Now let us consider the mapping $f$ defined on $m$ pairs
$T_i=(t_i,\tau_i):$
$$f(T_1,\dots,T_m)=p(t_1a_1+\tau_1\chi(a_1),t_2a_2+\tau_2\chi(a_2),\dots,t_ma_m+\tau_m\chi(a_m)).$$
Now let us open the brackets. We   showed in \cite{BMR1} (see the
proof of Lemma~8) that either all nonzero terms are diagonal, or
all nonzero terms are off-diagonal ($ce_{12}$ or $ce_{21}$). We
have the latter case, so  the image of $f$ contains only matrices
of the type $c_1e_{12}+c_2e_{21}$. Note that the matrices $e_{12}$
and $e_{21}$ both belong to the image of $f$ since
$p(a_1,\dots,a_m)=e_{12}$ and
$p(\chi(a_1),\dots,\chi(a_m))=e_{21}$.
According to Lemma \ref{dim2}
 the image
of $f$ is at least $2$-dimensional,
and lies in the $2$-dimensional plane $\langle e_{12}, e_{21}
\rangle.$ Therefore this plane is exactly the image of $f$. Now we
are ready to prove the following:
\begin{lemma}\label{genfield}
 If $p$ is a multilinear polynomial evaluated on the
 matrix ring $M_2(K)$ (for an arbitrary field  $K$),
 then $\Image p$ is either $\{0\}$, or  $K$,  or $\ssl_2\setminus K\subseteq\Image p$.
\end{lemma}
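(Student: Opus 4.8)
The plan is to treat the two degenerate cases first and then reduce the remaining case to an elementary, characteristic-free fact about $2\times 2$ matrices. If $p$ is a polynomial identity of $M_2(K)$ then $\Image p=\{0\}$. If $p$ is central but not a PI, then every value $p(a_1,\dots,a_m)$ is a scalar matrix and at least one of them is nonzero; since $p$ is multilinear, $p(\lambda a_1,a_2,\dots,a_m)=\lambda\,p(a_1,\dots,a_m)$, so scaling a nonzero value shows $\Image p$ is all of $K$. From now on assume $p$ is neither a PI nor central. Then, as recalled just above, Lemmas~\ref{graph} and~\ref{linear} furnish matrix units $a_1,\dots,a_m$ with $p(a_1,\dots,a_m)=e_{12}$, and the associated multilinear map $f$ has image exactly the plane $\langle e_{12},e_{21}\rangle$; in particular every matrix $c_1e_{12}+c_2e_{21}$ with $c_1,c_2\in K$ lies in $\Image p$.

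The second ingredient is that $\Image p$ is invariant under conjugation by $\GL_2(K)$: for invertible $g$ one has $g\,p(x_1,\dots,x_m)\,g^{-1}=p(gx_1g^{-1},\dots,gx_mg^{-1})$, which is again a value of $p$. Hence $\Image p$ is a $\GL_2(K)$-stable set containing the whole plane $\langle e_{12},e_{21}\rangle$, and it is enough to prove that every non-scalar trace-zero matrix $A\in M_2(K)$ is conjugate over $K$ to some $c_1e_{12}+c_2e_{21}$.

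For this I would invoke the rational canonical form in the guise of a cyclic vector. Since $A$ is not a scalar matrix, there is a vector $v$ for which $\{v,Av\}$ is a basis of $K^2$ --- otherwise $Av\in Kv$ for every $v$, i.e.\ every vector is an eigenvector, which forces $A$ to be scalar. In this basis, Cayley--Hamilton together with $\tr A=0$ gives $A^2v=-\det(A)\,v$, so $A$ is represented by the companion matrix $\left(\begin{smallmatrix}0&-\det A\\ 1&0\end{smallmatrix}\right)=-\det(A)\,e_{12}+e_{21}$, which lies in the plane. Therefore $A\in\Image p$; as $A$ runs over $\ssl_2\setminus K$ (the non-scalar trace-zero matrices, which also includes the nilpotent case $\det A=0$, where the companion matrix is simply $e_{21}$), we conclude $\ssl_2\setminus K\subseteq\Image p$.

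The point to be careful about --- and the reason the statement is not immediate from the quadratically closed case of \cite{BMR1} --- is that over a general field one cannot argue through eigenvalues or diagonalization: when $K$ has characteristic $2$ the characteristic polynomial $\lambda^2+\det A$ of a trace-zero matrix is a perfect square and hence never separable, and over a non-closed field the roots of $\lambda^2+\det A$ may fail to lie in $K$ at all. The cyclic-vector argument sidesteps both issues because it never leaves $K$. The finer question of whether $\Image p$ is exactly $\ssl_2$ (or larger) is deferred to the subsequent sections; here only the stated inclusion is claimed.
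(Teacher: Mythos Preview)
Your proof is correct and follows essentially the same route as the paper: after disposing of the PI and central cases, you use the plane $\langle e_{12},e_{21}\rangle\subseteq\Image p$ established just before the lemma, together with conjugation invariance, and then the cyclic-vector/companion-matrix argument (choose $v$ with $\{v,Av\}$ a basis, use $A^2=-\det(A)I$) to show every non-scalar trace-zero matrix is similar to a matrix in that plane. The paper's own proof is exactly this cyclic-vector reduction, stated a bit more tersely; your added remarks on why eigenvalue arguments fail in characteristic $2$ or over non-closed fields are accurate commentary but not needed for the argument itself.
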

\begin{proof}
 Let $A$ be any trace zero, non-scalar matrix. Take any vector $v_1$ that is not an eigenvector of $A$.
 Consider the vector $v_2=Av_1$. Note that $Av_2=A^2v_1=-\det(A)v_1$, and therefore the matrix $A$ with respect to
 the base $\{v_1,v_2\}$ has the form $c_1e_{12}+c_2e_{21}$, for some $c_i$. Hence $A$ is similar to $c_1e_{12}+c_2e_{21} \in\Image p,$ implying $A\in\Image p$.
\end{proof}
\begin{rem}\label{chr-n2}
 Note that for $\Char(K)\neq 2$ (in particular  for $K=\mathbb{R}$),   $$(\ssl_2\setminus K) \cup\{0\}=
 \ssl_2\subseteq\Image p.$$
\end{rem}
\section{The real case}
Throughout this section we assume that $K=\mathbb{R}$. By  Lemma
\ref{genfield} we know that either $p$ is PI, or central, or
$\ssl_2\subseteq\Image p$. Assume that $\ssl_2\subsetneqq\Image
p$. We will use the following lemma:
\begin{lemma}\label{ineq}
 Let $p$ be any multilinear polynomial   satisfying $\ssl_2\subsetneqq\Image p$.
 For any $q\in\mathbb{R}$ there exist generic matrices $x_1,\dots, x_m,y_1,\dots,y_m$ such that
 for $X=p(x_1,\dots,x_m)$ and $Y=p(y_1,\dots,y_m)$ we have the following:
 $$\frac{\det X}{\tr^2 X}\leq q\leq \frac{\det Y}{\tr^2 Y},$$
 where   $\tr^2 M$ denotes the square of the trace of $M$.
\end{lemma}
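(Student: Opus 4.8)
The plan is to exploit the function $\phi(M) = \det M / \tr^2 M$, defined on matrices with $\tr M \neq 0$, as a continuous function of the matrix entries, and to use the hypothesis $\ssl_2 \subsetneqq \Image p$ together with Lemma~\ref{gen-real} to find generic evaluations on either side of an arbitrary $q \in \R$. First I would observe that since $\ssl_2 \subsetneqq \Image p$, there is some tuple $(b_1,\dots,b_m)$ of real matrices with $B := p(b_1,\dots,b_m)$ having $\tr B \neq 0$; after rescaling (using multilinearity) we may assume $\tr B$ is any convenient nonzero value, and by adjusting one argument slightly we stay in the region $\tr \neq 0$. The point is that near such a $B$, the quantity $\phi(p(\cdot))$ is a continuous function of the $m$ matrix arguments, i.e.\ of $4m$ real entries, so Lemma~\ref{gen-real} (and the Remark following it) applies directly: if we can arrange the value of $\phi$ at some genuine real point to be $< q$ at one place and $> q$ at another, we can push those points to generic ones while preserving the strict inequalities.

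So the heart of the matter is to show that the \emph{real} image of $\phi \circ p$, on the locus where $\tr \neq 0$, is unbounded both above and below — more precisely, that it contains values on both sides of every $q$. Here I would use that $\Image p$ contains all of $\ssl_2$ together with at least one non-traceless matrix $B$. By the multilinear rescaling trick we may produce, for each scalar $\lambda \in \R$, a matrix $\lambda B \in \Image p$ with $\phi(\lambda B) = \phi(B)$ fixed; that alone does not move $\phi$. Instead I would take a path: pick $N \in \ssl_2$ and consider that $\Image p$ is closed under the ``deformation'' coming from replacing the arguments achieving $B$ by a combination with the arguments achieving $N$ — but $\Image p$ need not be convex, so a cleaner route is to note that $\Image p$ contains $B + N$ for suitable $N$? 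That is \emph{not} automatic either. The correct mechanism, I expect, is the one already set up in \S\ref{im-of-pol}: using the pair-substitution map $f(T_1,\dots,T_m)$ built from matrix units, one gets a multilinear map whose image is a full $2$-dimensional space, and a parallel construction starting from an evaluation giving a non-traceless matrix yields enough matrices in $\Image p$ with prescribed trace and determinant to make $\phi$ take both large-positive and large-negative values. Concretely, $\phi$ on the set of all $2\times 2$ real matrices with nonzero trace takes every value in $\R$ (e.g.\ $\phi\left(\begin{smallmatrix} 1 & 0 \\ 0 & s\end{smallmatrix}\right) = s/(1+s)^2$ ranges over $(-\infty, 1/4]$, and non-diagonalizable or complex-eigenvalue matrices fill in the rest), so it suffices to show $\Image p$ meets a two-parameter family rich enough to realize $\phi$-values straddling $q$.

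Thus the key steps, in order, are: (1) from $\ssl_2 \subsetneqq \Image p$, extract a concrete real evaluation $B = p(\bar b)$ with $\tr B \neq 0$; (2) combine $B$ with evaluations landing in $\ssl_2$ via the multilinear pair-substitution technique of \S\ref{im-of-pol} to produce a family $\{p(\text{tuple}(a,b))\}$ inside $\Image p$ on which $(\tr, \det)$, hence $\phi$, varies continuously and achieves values both $< q$ and $> q$ for the given $q$; (3) apply Lemma~\ref{gen-real} at a real point where $\phi < q$ to get generic $x_1,\dots,x_m$ with $\det X/\tr^2 X < q$, and apply the dual Remark at a real point where $\phi > q$ to get generic $y_1,\dots,y_m$ with $\det Y/\tr^2 Y > q$. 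The main obstacle is step~(2): establishing that $\Image p$ genuinely contains matrices with arbitrarily small (very negative) and arbitrarily large values of $\det/\tr^2$, since $\Image p$ is a priori only a conjugation-invariant, scaling-invariant set, not a linear space — one must leverage the specific structure ($\ssl_2 \subseteq \Image p$ plus one extra matrix, plus multilinearity) rather than any convexity, and check that the trace genuinely decouples from the determinant enough to straddle an \emph{arbitrary} $q$ including $q = 1/4$ and beyond.
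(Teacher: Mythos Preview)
Your overall framework---reduce to showing $\phi = \det/\tr^2$ takes values on both sides of $q$ at some real evaluation, then invoke Lemma~\ref{gen-real} to pass to generic matrices---is exactly right and matches the paper. The gap is precisely where you flag it: step~(2). Your proposed mechanism (combine a non-traceless $B\in\Image p$ with traceless evaluations via the pair-substitution map $f$ of \S\ref{im-of-pol}) does not work as stated, because $\Image f = \langle e_{12},e_{21}\rangle\subseteq\ssl_2$ consists entirely of trace-zero matrices, so $\phi$ is nowhere defined on it; and as you note yourself, $\Image p$ need not be convex, so ``$B+N$'' is not available either. You are left without a way to force $\phi$ past an arbitrary $q$, in particular past $q>1/4$.

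The paper's resolution is much simpler and bypasses all of this. It never tries to build a family inside $\Image p$ on which $\phi$ varies. Instead it uses only that $\ssl_2\subseteq\Image p$: take $\Omega=e_{11}-e_{22}$ and $\Upsilon=e_{12}-e_{21}$, both realized as $p(a_1,\dots,a_m)$ and $p(b_1,\dots,b_m)$ for some real matrix tuples. These have $\tr=0$ but $\det\Omega=-1$ and $\det\Upsilon=+1$. Now simply perturb the \emph{inputs} $a_i$ (resp.\ $b_i$) slightly. Because $\ssl_2\subsetneqq\Image p$, the polynomial $\tr p$ is not identically zero, so a generic perturbation $x_i$ of $a_i$ gives $X=p(x_1,\dots,x_m)$ with $\tr X\neq 0$; meanwhile $\det X$ stays close to $-1$ and $\tr^2 X$ is a tiny positive number, so $\phi(X)$ is a large negative number, in particular $<q$. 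Symmetrically, near $\Upsilon$ one gets $\phi(Y)$ large positive, $>q$. Lemma~\ref{gen-real} then furnishes the generic $x_i,y_i$. The point you missed is that the singularity of $\phi$ along $\{\tr=0\}$ is a feature, not a bug: approaching it from the side with the correct sign of the determinant sends $\phi$ to $\pm\infty$, which is how one straddles \emph{any} $q$ without needing control of $\Image p$ away from $\ssl_2$.
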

\begin{proof}
 We know that $\ssl_2\subseteq\Image p$, in particular for the matrices
 $\Omega=e_{11}-e_{22}$ and $\Upsilon=e_{12}-e_{21}$
 there exist matrices $a_1,\dots,a_m,b_1,\dots,b_m$ such that
 $p(a_1,\dots,a_m)=\Omega$ and
 $p(b_1,\dots,b_m)=\Upsilon$.
 Note $\frac{\det M}{\tr^2 M}=<q$ if $M$ is close to $\Omega$ and
 $\frac{\det M}{\tr^2 M}>q$ if $M$ is close to $\Upsilon$.
 Now we consider a very small $\delta>0$ such that for any matrices $x_i\in N_\delta(a_i)$ and $y_i\in N_\delta(b_i)$
 $$\frac{\det X}{\tr^2 X}\leq q\leq \frac{\det Y}{\tr^2 Y},$$ where
 $X=p(x_1,\dots,x_m)$ and $Y=p(y_1,\dots,y_m)$.
 Here by $N_\delta(x)$ we denote a $\delta$-neighborhood of $x$, under the
 max norm $\Arrowvert A \Arrowvert=\max\limits_{i,j} \arrowvert a_{ij} \arrowvert$.
 According to Lemma \ref{gen-real} one can choose generic matrices with such property.
\end{proof}
 Now we are ready to prove that the image of $g(x_1,\dots,x_m)=\frac{\det p}{\tr^2 p}$ is everything:
 \begin{lemma}\label{anyq}
 Let $p$ be any multilinear polynomial satisfying $\ssl_2\subsetneqq\Image p$.
 Then for any $q\in\mathbb{R}$ there exists a set of matrices $a_1,\dots, a_m$ such that
 \begin{equation}\label{eq}
\frac{\det p(a_1,\dots,a_m)}{\tr^2 p(a_1,\dots,a_m)}= q.
 \end{equation}
 \end{lemma}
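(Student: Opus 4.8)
The plan is to apply the intermediate value theorem to the rational function $g=\det p/\tr^2 p$ along the line segment joining the two generic tuples furnished by Lemma~\ref{ineq}. First apply Lemma~\ref{ineq} with the given $q$ to obtain matrices $x_1,\dots,x_m,y_1,\dots,y_m$ that are generic over $F=\Q[c_1,\dots,c_N]$ (the $c_i$ being the coefficients of $p$) and satisfy $\det X/\tr^2X\le q\le\det Y/\tr^2Y$, where $X=p(x_1,\dots,x_m)$ and $Y=p(y_1,\dots,y_m)$. Write $\vec x$ and $\vec y$ for the corresponding tuples of matrix entries, put $\vec z(s)=(1-s)\vec x+s\vec y$, and set $\tau(s)=\tr p(\vec z(s))$ and $\phi(s)=\det p(\vec z(s))$; these are polynomials in $s$ with coefficients in $F[\vec x,\vec y]$. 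Since $\ssl_2\subsetneqq\Image p$, the polynomial function $\tr p$ on $m$\dash tuples of matrices is not identically zero, so by genericity $\tau(0)=\tr X\ne0$ and $\tau(1)=\tr Y\ne0$; in particular $g(\vec x)$ and $g(\vec y)$ are defined, and if either already equals $q$ we are done, so assume $g(\vec x)<q<g(\vec y)$.

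The crux — the step I expect to be the main obstacle — is to rule out a $0/0$ indeterminacy of $s\mapsto g(\vec z(s))=\phi(s)/\tau(s)^2$ on $[0,1]$; equivalently, to show that $\tau$ and $\phi$ are coprime. Over the field $F(\vec x,\vec y)$ this coprimality is equivalent to the non-vanishing of the resultant $\operatorname{Res}_s(\tau,\phi)$, which is an element of $F[\vec x,\vec y]$; since our $(\vec x,\vec y)$ is generic over $F$, it suffices to know that this resultant is not the zero polynomial, i.e.\ that $\tau$ and $\phi$ are not forced to share a nonconstant factor. Establishing this — for instance by a dimension count for the subvariety $\{\tau=\phi=0\}$ of $\R^{8m}\times\R$, or by exhibiting a single tuple of matrices along whose segment $\tr p$ and $\det p$ have no common zero — is the technical heart. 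Granting it, $\tau$ and $\phi$ are coprime as polynomials in $s$, so $g\circ\vec z$ is continuous on $[0,1]$ off the finite set $s_1<\dots<s_k$ of real zeros of $\tau$ lying in $(0,1)$, and near each $s_i$ it tends to $\epsilon_i\cdot\infty$, where $\epsilon_i:=\sign\phi(s_i)\in\{\pm1\}$ (recall $\tau^2\ge0$).

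The final step is a componentwise intermediate value argument. If $k=0$, then $g\circ\vec z$ is continuous on $[0,1]$ with $g(\vec x)<q<g(\vec y)$ and attains $q$. If $k\ge1$, suppose toward a contradiction that $g\circ\vec z$ never equals $q$ on $[0,1]$. On $[0,s_1)$ its image is an interval containing $g(\vec x)<q$ and values arbitrarily close to $\epsilon_1\infty$; avoiding $q$ forces $\epsilon_1=-1$. Symmetrically, on $(s_k,1]$ the image contains $g(\vec y)>q$ and values close to $\epsilon_k\infty$, forcing $\epsilon_k=+1$. On each $(s_i,s_{i+1})$ the image contains values close to both $\epsilon_i\infty$ and $\epsilon_{i+1}\infty$, forcing that no consecutive pair satisfies $(\epsilon_i,\epsilon_{i+1})=(-1,+1)$. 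But $\epsilon_1=-1$ together with $\epsilon_k=+1$ makes such a pair inevitable — a contradiction. Hence there is $s_\ast\in[0,1]$ with $g(\vec z(s_\ast))=q$, and the matrices whose entries are the coordinates of $\vec z(s_\ast)$ are the desired $a_1,\dots,a_m$ satisfying~\eqref{eq}.
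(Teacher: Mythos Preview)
Your argument has a genuine gap at exactly the point you yourself flag as ``the technical heart'': you never establish that $\tau$ and $\phi$ are coprime for the generic tuple $(\vec x,\vec y)$ supplied by Lemma~\ref{ineq}; you merely grant it. Neither of the two routes you sketch is carried out, and neither is obviously easy for an arbitrary multilinear $p$. For instance, along the segment from $\vec x$ to $2\vec x$ one has $p(\vec z(s))=(1+s)^m p(\vec x)$, so $\tau(s)=(1+s)^m\tr p(\vec x)$ and $\phi(s)=(1+s)^{2m}\det p(\vec x)$ share the factor $(1+s)^m$; thus ``exhibiting a single tuple'' already requires some care and an argument specific to $p$. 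Without this step, your subsequent pole analysis and sign-tracking intermediate value argument (which is correct as stated) rests on an unproved hypothesis.

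The paper's proof avoids this difficulty entirely by exploiting multilinearity more sharply. Rather than interpolating all $m$ arguments at once, it swaps them one at a time, producing a chain $A_0,A_1,\dots,A_m$ in which $A_i$ and $A_{i+1}$ differ in a single slot; crucially, at each step the incoming argument is taken to be $\tilde y_j=\pm y_j$ with the sign chosen so that $\tr A_j>0$. This is harmless for the target quantity because $\det M/\tr^2 M$ is invariant under $M\mapsto -M$, so the chain still runs from a value $\le q$ to a value $\ge q$. Picking $i$ with $\det A_i/\tr^2 A_i\le q\le \det A_{i+1}/\tr^2 A_{i+1}$ and interpolating only in that one slot gives $M(t)=(1-t)A_i+tA_{i+1}$, which is \emph{affine} in $t$; hence $\tr M(t)$ is affine with positive values at both endpoints and therefore positive on all of $[0,1]$. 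The ratio $\psi(t)=\det M(t)/\tr^2 M(t)$ is then continuous on $[0,1]$ with no poles whatsoever, and a single application of the intermediate value theorem finishes. The trick you are missing is precisely this combination of one-variable-at-a-time interpolation with the sign flip that forces the trace to stay positive.
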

\begin{proof}
 Let $q$ be any real number. According to Lemma \ref{ineq}
 there exist generic matrices $x_1,\dots, x_m,y_1,\dots,y_m$ such that
 for $X=p(x_1,\dots,x_m)$ and $Y=p(y_1,\dots,y_m)$ we have the following:
 $$\frac{\det X}{\tr^2 X}\leq q\leq \frac{\det Y}{\tr^2 Y}.$$
 Consider the following matrices:
 $A_0=p(\tilde x_1,x_2, \dots,x_m)$, where
 $\tilde x_1$ is either $x_1$ or $-x_1$, such that $\tr A_0>0$.
 $A_1=p(\tilde y_1,x_2,\dots,x_m)$, where
 $\tilde y_1$ is either $y_1$ or $-y_1$ such that $\tr A_1>0$.
 Assume that $A_i$, $\tilde x_1$, $\tilde y_1,\dots,\tilde y_i$ are defined.
 Let $$A_{i+1}=p(\tilde y_1,\dots,\tilde y_i,\tilde y_{i+1},x_{i+2},\dots,x_m)$$ where
 $\tilde y_{i+1}=\pm y_{i+1}$ is such that $\tr A_{i+1}>0$.
 In such a way we defined matrices $A_i$ for $0\leq i\leq m$.
 Note that for any $2\times 2$ matrix~$M$,
 $$\frac{\det M}{\tr^2 M}=\frac{\det (-M)}{\tr^2 (-M)}$$
 Note that $A_0=\pm p(x_1,\dots,x_m)$ and $A_m=\pm p(y_1,\dots, y_m);$ hence
 $$\frac{\det A_0}{\tr^2 A_0}\leq q\leq \frac{\det A_m}{\tr^2 A_m}.$$
 Therefore there exists $i$ such that
 $$\frac{\det A_i}{\tr^2 A_i}\leq q\leq \frac{\det A_{i+1}}{\tr^2 A_{i+1}}.$$
Since
 $A_{i}=p(\tilde y_1,\dots,\tilde y_i,x_{i+1},x_{i+2},\dots,x_m)$ and
 $A_{i+1}=p(\tilde y_1,\dots,\tilde y_{i+1},x_{i+2},\dots,x_m)$,
  we can consider the matrix function
 $$M(t)=(1-t)A_i+tA_{i+1}=p(\tilde y_1,\dots,\tilde y_i,(1-t)x_{i+1}+t\tilde y_{i+1},x_{i+2},\dots,x_m),$$
 Then $\Image M\subseteq\Image p,$ $M(0)=A_i$, $M(1)=A_{i+1}$ both $M(0)$ and $M(1)$ have positive trace, and
 $M$ is an affine function. Therefore for any $t\in [0,1]$ $M(t)$ has positive trace.
 Therefore the function $\psi(t)=\frac{\det M(t)}{\tr^2 M(t)}$ is well defined on $[0,1]$ and continuous.
 Also we have $\psi(0)\leq q\leq \psi(1)$. Thus there exists $\tau\in [0,1]$ such that $\psi(\tau)=q$ and thus
 $M(\tau)\in\Image p$ satisfies equation
 \eqref{eq}.
\end{proof}
\begin{lemma}\label{discr-not-zero}
 Let $p$ be a multilinear polynomial satisfying $\ssl_2\subsetneqq\Image p$.
 Then any matrix with distinct eigenvalues (i.e. matrix of nonzero discriminant) belongs to $\Image p.$
\end{lemma}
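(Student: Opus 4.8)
The plan is to match characteristic polynomials with an element of $\Image p$ and then invoke the conjugation-invariance of $\Image p$. Let $M$ be a matrix with $\discr M \neq 0$, so its two eigenvalues are distinct. First I would dispose of the case $\tr M = 0$: then $M$ is a non-zero trace-zero matrix, hence $M \in \ssl_2 \subseteq \Image p$ by hypothesis, and we are done. So from now on assume $\tr M \neq 0$ and set $q = \det M / \tr^2 M \in \R$.

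Next I would apply Lemma \ref{anyq} to this $q$, obtaining matrices $a_1,\dots,a_m$ such that $N := p(a_1,\dots,a_m)$ satisfies $\det N / \tr^2 N = q$; note that $\tr N \neq 0$ automatically, since otherwise that ratio would be undefined. Now I would rescale one argument: by multilinearity $p(\lambda a_1, a_2, \dots, a_m) = \lambda N \in \Image p$ for every $\lambda \in \R$, while $\tr(\lambda N) = \lambda \tr N$ and $\det(\lambda N) = \lambda^2 \det N$. Taking $\lambda = \tr M / \tr N$ gives $\tr(\lambda N) = \tr M$ and $\det(\lambda N) = \lambda^2 \tr^2 N \cdot q = \tr^2 M \cdot q = \det M$, so $\lambda N$ and $M$ have the same trace and determinant, hence the same characteristic polynomial, which still has distinct roots (its discriminant equals $\discr M \neq 0$).

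Finally I would use the elementary fact that two $2 \times 2$ matrices over a field with the same characteristic polynomial having distinct roots are similar: each has minimal polynomial equal to that degree-two polynomial (a linear minimal polynomial would force the characteristic polynomial to be a perfect square, contradicting distinctness of the roots), hence each is conjugate to the companion matrix of that polynomial. Therefore $M$ is similar to $\lambda N \in \Image p$, and since $\Image p$ is closed under conjugation, $M \in \Image p$.

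I do not expect a genuine obstacle here: all of the analytic difficulty has already been absorbed into Lemma \ref{anyq}. The only small points to keep straight are that the output of Lemma \ref{anyq} automatically has non-zero trace, and that rescaling a single argument keeps us inside $\Image p$, both immediate from multilinearity of $p$.
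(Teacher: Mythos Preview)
Your proof is correct and follows essentially the same line as the paper's own argument: apply Lemma~\ref{anyq} to the ratio $q=\det M/\tr^2 M$, rescale one variable to match traces (hence determinants), and conclude by similarity of $2\times 2$ matrices with the same characteristic polynomial and distinct roots. If anything, your version is slightly more careful, since you explicitly treat the case $\tr M=0$ (which the paper's proof passes over silently) and spell out why the output of Lemma~\ref{anyq} has nonzero trace.
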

\begin{proof}
 Let $A$ be any matrix with nonzero discriminant. Let us show that $A\in\Image p$.
 Let $q=\frac{\det A}{\tr^2 A}$.
 According to Lemma \ref{anyq}
 there exists a set of matrices $a_1,\dots, a_m$ such that
 $\frac{\det \tilde A}{\tr^2 \tilde A}= q,$ where $\tilde A=p(a_1,\dots,a_m)$.
 Take $c\in\R$ such that $\tr(c\tilde A)=\tr A$. Note $c\tilde A=p(ca_1,a_2,\dots,a_m)$ belongs to $\Image p.$
 Thus $$\frac{\det (c\tilde A)}{\tr^2 (c\tilde A)}=q=\frac{\det A}{\tr^2 A},$$ and $\tr A=\tr(c\tilde A)$.
 Hence, $\det(c\tilde A)=\det(A)$. Therefore the matrices $c\tilde A$ and $A$ are similar since
 they are not from the discriminant surface. Therefore $A\in\Image p$.
\end{proof}
\begin{lemma}\label{unip}
 Let $p$ be a multilinear polynomial satisfying $\ssl_2\subsetneqq\Image p$.
 Then any non-scalar matrix with zero discriminant belongs to $\Image p.$
\end{lemma}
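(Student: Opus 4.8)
The plan is to reduce the statement to producing a \emph{single} well‑chosen matrix in $\Image p$, and to obtain that matrix from Lemma~\ref{anyq} applied with $q=\tfrac14$, the value at which the condition $\tfrac{\det M}{\tr^2 M}=q$ is equivalent to $\discr M=0$.

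\emph{Reduction.} Since $\Char\R\neq 2$, a non‑scalar matrix $A$ with $\discr A=0$ has a repeated eigenvalue $\mu=\tfrac12\tr A$ and hence is similar to $\mu I+e_{12}$. If $\mu=0$, then $A$ is a non‑zero nilpotent, so $A\sim e_{12}\in\ssl_2\subseteq\Image p$, and as $\Image p$ is conjugation invariant, $A\in\Image p$. Thus it suffices to exhibit one $\tilde A\in\Image p$ which is non‑scalar with $\tr\tilde A\neq 0$ and $\discr\tilde A=0$: writing $\tilde A\sim\nu I+e_{12}$ with $\nu=\tfrac12\tr\tilde A\neq 0$, for every $\mu\neq 0$ the matrix $\tfrac{\mu}{\nu}\tilde A$ again lies in $\Image p$ (multilinearity makes $\Image p$ a cone) and is similar to $\mu I+e_{12}$ (conjugate by a diagonal matrix to rescale the off‑diagonal entry); conjugation invariance then yields every non‑scalar matrix of zero discriminant.

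\emph{Producing $\tilde A$.} Because $\discr M=\tr^2M-4\det M$, a matrix $M$ with $\tr M\neq 0$ satisfies $\discr M=0$ exactly when $\tfrac{\det M}{\tr^2M}=\tfrac14$. Apply Lemma~\ref{anyq} with $q=\tfrac14$: its proof produces the value $\tfrac14$ at a matrix $M(\tau)$ lying on an affine segment $M(t)\subseteq\Image p$, $t\in[0,1]$, with $\tr M(t)>0$ throughout, where $M(0)=A_i$ and $M(1)=A_{i+1}$ are consecutive terms of the chain joining $A_0=\pm p(x_1,\dots,x_m)$ to $A_m=\pm p(y_1,\dots,y_m)$, with $p(x_1,\dots,x_m)$ close to $\Omega=e_{11}-e_{22}$ and $p(y_1,\dots,y_m)$ close to $\Upsilon=e_{12}-e_{21}$ (as in the proof of Lemma~\ref{ineq}). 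Put $\tilde A:=M(\tau)$; then $\tilde A\in\Image p$, $\tr\tilde A>0$, $\discr\tilde A=0$.

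\emph{Non‑scalarity of $\tilde A$ (the main point).} Since $\discr\Omega=4>0$ and $\discr\Upsilon=-4<0$, we have $\discr A_0>0$ and $\discr A_m<0$. If $\discr A_j=0$ for some $j$, then $A_j$ itself works: it is $p$ evaluated at generic matrices, hence non‑scalar ($p$ is neither a PI nor central because $\ssl_2\subseteq\Image p$), with $\tr A_j>0$ and $\discr A_j=0$. Otherwise some consecutive pair satisfies $\discr A_i>0>\discr A_{i+1}$, so $t\mapsto\discr M(t)$ is a polynomial of degree at most $2$ with opposite‑sign values at $t=0,1$, hence has a simple root $\tau\in(0,1)$ at which it changes sign. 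But for a scalar matrix $cI$ one has $\discr(cI+E)=\discr E$ for every $E$, so $\discr$ restricted to any affine line through $cI$ equals $(t-t_0)^2\,\discr(D)$ (with $D$ the direction of the line) --- a constant times a square, which never changes sign. Therefore a sign‑change point of $\discr$ along an affine line is never a scalar matrix, so $\tilde A=M(\tau)$ is non‑scalar, completing the proof.

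I expect the genuine obstacle to be exactly this last step: a priori the matrix furnished by Lemma~\ref{anyq} at $q=\tfrac14$ could be a non‑zero scalar matrix (scalar matrices also satisfy $\det/\tr^2=\tfrac14$), and what excludes this is the observation that along any line meeting the scalar locus the discriminant is a nowhere‑sign‑changing perfect square, which forces the intermediate‑value argument to land on a genuine Jordan block rather than on a scalar.
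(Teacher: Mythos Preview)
Your proof is correct, and the overall architecture matches the paper's: reduce to producing one non-scalar, non-nilpotent matrix of discriminant zero in $\Image p$; build an interpolating chain between a value near $\Omega=e_{11}-e_{22}$ (positive discriminant) and one near $\Upsilon=e_{12}-e_{21}$ (negative discriminant); locate $\tau$ with $\discr M(\tau)=0$; then exclude the degenerate possibilities. The non-scalarity step is essentially identical in both arguments---your ``perfect square'' observation and the paper's computation both boil down to $\discr(cI+E)=\discr E$, which forces $\discr M(t)=(t-\tau)^2\discr(A_{i+1}-A_i)$ along any line through a scalar, contradicting the sign change.

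The genuine difference is in how nilpotency of $M(\tau)$ is ruled out. You recycle the sign-adjusted chain from the proof of Lemma~\ref{anyq}, where each $A_j$ is arranged to have $\tr A_j>0$, so $\tr M(t)>0$ on $[0,1]$ and non-nilpotency is immediate. The paper instead builds its chain \emph{without} sign adjustments and then argues: if $M(t)$ were nilpotent for a unique $t$, that $t$ would be an invariant rational function, hence (via Proposition~\ref{procesi}) an element of UD, making $M(t)$ a nilpotent in the domain UD (Proposition~\ref{Am1}), a contradiction; and if $M(t)$ were nilpotent for two distinct parameters, the whole line would be trace-zero, forcing $\tr p\equiv 0$ on generic matrices and hence $\Image p\subseteq\ssl_2$. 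Your route is more elementary---it sidesteps the invariant-theory and division-algebra machinery entirely---at the cost of depending on the internals of another proof rather than just its statement. A minor remark: your case ``$\discr A_j=0$ for some $j$'' is actually vacuous, since each $A_j$ is $p$ evaluated at generic matrices and $\discr p$ is not identically zero (it equals $4$ at $\Omega$); so the strict inequalities $\discr A_i>0>\discr A_{i+1}$ hold automatically along the chain.
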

\begin{proof}
 Let $A$ be any non-scalar matrix with zero discriminant. Let us show that $A\in\Image p$.
 The eigenvalues of $A$ are equal, and therefore they must be real.
 Thus $A$ is similar to the matrix
 $\tilde A = \left(
 \begin{matrix}
 \lambda & 1
 \\  0 & \lambda
 \end{matrix}
 \right)
 .$
 If $A$ is nilpotent then $\lambda=0$ and $\tilde A=e_{12}$, and it belongs to $\Image p$  by Lemmas \ref{graph} and \ref{linear}.
 If $A$ is not nilpotent then we need to prove that at least one non-nilpotent matrix of such type belongs to $\Image p,$ and
 all other are similar to it.
 We know that the matrices $e_{11}-e_{22}=p(a_1,\dots,a_m)$ and $e_{12}-e_{21}=p(b_1,\dots,b_m)$ for some $a_i$ and $b_i$.
 Note that $e_{11}-e_{22}$ has positive discriminant and $e_{12}-e_{21}$ has negative discriminant.
 Take generic matrices $x_1,x_2,\dots,x_m, y_1,\dots,y_m$ such that $x_i\in N_\delta(a_i)$ and $y_i\in N_\delta(b_i)$ where
 $\delta>0$ is so small that $p(x_1,\dots,x_m)$ has positive discriminant and $p(y_1,\dots,y_m)$ has negative discriminant.
 Consider the following matrices:
 $$A_0=p(x_1,x_2, \dots,x_m) ,\qquad  A_i=p(y_1,\dots,y_i,\ x_{i+1},\dots,x_m), 1 \le i \le m.$$

 We know that $\discr A_0>0$ and $\discr A_m<0$, and therefore there exists $i$ such that $\discr A_i>0$ and $\discr A_{i+1}<0$.
 We can consider the continuous matrix function
 $$M(t)=(1-t)A_i+tA_{i+1}=p(y_1,\dots,y_i,(1-t)x_{i+1}+ty_{i+1},x_{i+2},\dots,x_m).$$
 We know that $M(0)$ has positive discriminant and $M(1)$ has negative discriminant.
 Therefore for some $\tau$, $M(\tau)$ has discriminant zero.
 Assume there exists $t$ such that $M(t)$ is nilpotent.
 In this case either $t$ is unique or there exists  $t'\neq t$ such that $M(t')$ is also nilpotent.
 If $t$ is unique then it equals to some rational function with respect to other variables (entries of matrices $x_i$ and $y_i$).
 In this case $t$ can be considered as a function on matrices $x_i$ and $y_i$ and as soon as it is invariant, according to
 the Proposition \ref{procesi} $t$ is an element of UD
 and thus
 $M(t)$ is the element of UD. Therefore $M(t)$ cannot be nilpotent since UD is a domain according to Proposition \ref{Am1}.
 If there exists  $t'\neq t$ such that $M(t')$ is also nilpotent then for any $\tilde t\in \R$ $M(\tilde t)$ is the combination
 of two nilpotent (and thus trace vanishing) matrices $M(t)$ and $M(t')$. Hence $M(0)$ is trace vanishing and thus
 $\Image p\subseteq\ssl_2$, a contradiction.

 Recall that we proved $M(\tau)$ has discriminant zero  that for some $\tau$. Note that $M(\tau)$ cannot be nilpotent.
 Assume that the matrix $M(\tau)$ is scalar.
 Hence $(1-\tau)A_i+\tau A_{i+1}=\lambda I$ where $\lambda\in\R$ and $I$ is the identity matrix.
 Thus,
 $A_{i+1}=\frac{1-\tau}{\tau}A_i+cI$.
 Note that for any matrix $M$ and any $c\in\R$ we have $\discr(M)=\discr(M+cI)$.
 Therefore the discriminant of $A_{i+1}$ can be written as
 $$\discr(A_{i+1})=\discr\left(\frac{1-\tau}{\tau}A_i\right)=\left(\frac{1-\tau}{\tau}\right)^2\discr(A_i),$$
 a contradiction, since $\discr A_i>0$ and $\discr(A_{i+1})<0$.
 Therefore the matrix $M(\tau)$ is similar to $A$.
\end{proof}
\begin{lemma}\label{scalar}
 Let $p$ be a multilinear polynomial satisfying $\ssl_2\subsetneqq\Image p$.
 Then every scalar matrix belongs to $\Image p.$
\end{lemma}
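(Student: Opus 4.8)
\emph{Proof plan.} Since $p(0,\dots,0)=0$ the case $\lambda=0$ is trivial, and once a single nonzero scalar matrix is known to lie in $\Image p$ rescaling one argument produces all of them; so it suffices to exhibit matrices $b_1,\dots,b_m$ with $p(b_1,\dots,b_m)$ a nonzero scalar. The key observation is that a value of $p$ lying in Amitsur's division algebra $\mathrm{UD}$ (Proposition~\ref{Am1}) and having vanishing discriminant is automatically scalar: if $C\in\mathrm{UD}$ satisfies $\discr C=0$, then from $C^2=(\tr C)C-(\det C)I$ one gets $\bigl(C-\tfrac{\tr C}{2}I\bigr)^2=\tfrac14\discr C\cdot I=0$, and a division algebra has no nonzero square\dash zero element, so $C=\tfrac{\tr C}{2}I$.

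The plan is therefore to run a path argument, in the style of Lemmas~\ref{anyq} and~\ref{unip}, so arranged that the instant at which the discriminant vanishes is a \emph{rational} --- hence, by Propositions~\ref{Am1} and~\ref{procesi}, central --- function of the generic data. Since $p$ is neither a PI nor central, $e_{12}\in\Image p$ (Lemmas~\ref{graph} and~\ref{linear}); using this one fixes an index $j$ and matrices $d_1,\dots,\widehat{d_j},\dots,d_m$ so that the linear map $\ell(v)=p(d_1,\dots,d_{j-1},v,d_{j+1},\dots,d_m)$ sends some $v_0$ to a nonzero nilpotent $N=\ell(v_0)$, and so that in addition $\tr\ell(u)$ is not identically zero in $u$ --- this last requirement is exactly where $\ssl_2\subsetneqq\Image p$ enters, since it means $\tr p\not\equiv0$. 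With $u$ a further generic matrix put
$$M(t)=p\bigl(d_1,\dots,d_{j-1},\,u+tv_0,\,d_{j+1},\dots,d_m\bigr)=\ell(u)+tN\in\Image p .$$
Because $N$ is nilpotent, $\tr M(t)=\tr\ell(u)$ is constant and $\det M(t)$ is affine in $t$, so $\discr M(t)=\discr\ell(u)+4t\,\tr\!\bigl(\ell(u)N\bigr)$ is an affine function of $t$ whose leading coefficient is, for generic $u$, nonzero; hence there is a unique $\tau$ with $\discr M(\tau)=0$, namely $\tau=-\discr\ell(u)\big/\!\bigl(4\tr(\ell(u)N)\bigr)$, and this is a ratio of discriminant\dash type expressions in values of $p$, so by Propositions~\ref{Am1} and~\ref{procesi} it is central. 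Then $M(\tau)=p(d_1,\dots,u+\tau v_0,\dots,d_m)$ is a value of $p$ on elements of $\mathrm{UD}$, so $M(\tau)\in\mathrm{UD}$; as $\discr M(\tau)=0$, the first paragraph forces $M(\tau)=\tfrac{\tr\ell(u)}{2}I$, a \emph{nonzero} scalar since $\tr\ell(u)\neq0$. Specializing the generic entries to real values without destroying any of the (open) nondegeneracy conditions produces the desired $b_1,\dots,b_m$.

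The discriminant computations for $2\times2$ matrices are routine; the real work --- and the step I expect to be the main obstacle --- is getting the argument started, i.e.\ exhibiting the data so that $\discr M(t)$ is genuinely affine (not merely quadratic) in $t$ with nonzero leading term, so that $\tau$ is central and $M(\tau)\in\mathrm{UD}$. Concretely one must find a slot $j$ and generic $d_i,u,v_0$ for which simultaneously $\ell(v_0)$ is a nonzero nilpotent (ideally as an identity, so that no extension of the center of $\mathrm{UD}$ is required), the coefficient $\tr(\ell(u)N)$ is not identically zero in $u$ (equivalently $N\notin(\Image\ell)^{\perp}$ for the trace form), and $\tr\ell(u)\not\equiv0$; if $\ell$ is ever surjective one already has $I\in\Image p$, and otherwise $\dim\Image\ell\in\{1,2,3\}$ and the degenerate configurations (every slot giving $\dim\Image\ell\le1$, or the trace form being degenerate on every $\Image\ell$, or being forced onto a quadratic extension $Z(\sqrt{\Delta})$ that splits $\mathrm{UD}$) must be excluded using $\ssl_2\subsetneqq\Image p$ together with $e_{12}\in\Image p$. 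A purely algebraic alternative is to induct on $\deg p$ by substituting the identity matrix into one argument: if some $p(x_1,\dots,I,\dots,x_m)$ fails to be trace\dash vanishing one recurses via Lemma~\ref{genfield}, while the remaining case is handled by restricting to traceless arguments and using that $a^2\in\R I$ for $a\in\ssl_2$ --- but this stays further from the methods of the present section.
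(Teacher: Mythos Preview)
Your approach has a structural tension that you rightly flag as ``the main obstacle'' but do not resolve. Everything hinges on $M(\tau)\in\mathrm{UD}$, so that the domain property forces scalarity; for this you need $\tau$ central, and you appeal to Proposition~\ref{procesi}. But $\tau=-\discr\ell(u)\big/\bigl(4\tr(\ell(u)N)\bigr)$ is invariant only under \emph{simultaneous} conjugation of all of $d_1,\dots,d_m,u,v_0$. If $v_0$ is taken generic so that this symmetry is present, then $N=\ell(v_0)\in\mathrm{UD}$ and by Proposition~\ref{Am1} cannot be a nonzero nilpotent; $\discr M(t)$ is then quadratic in $t$, not affine, and you are back to a square root and the quadratic extension you were trying to avoid. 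If instead $v_0$ (and perhaps the $d_i$) are fixed matrix units so that $N$ really is nilpotent, then $v_0\notin\mathrm{UD}$, the conjugation symmetry is broken, Proposition~\ref{procesi} does not apply to $\tau$, and there is no reason for $u+\tau v_0$ or $M(\tau)$ to lie in $\mathrm{UD}$. Without the $\mathrm{UD}$ step your construction only produces a matrix of discriminant zero and nonzero trace, which is the content of Lemma~\ref{unip}, not the present lemma. Neither of the escape routes you sketch at the end is carried far enough to close this gap.

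The paper's argument is entirely different and much shorter; it never touches $\mathrm{UD}$ or discriminants. Since the linear span of $\Image p$ is all of $M_2$, Lemmas~\ref{graph} and~\ref{linear} give matrix units $a_1,\dots,a_m$ with $p(a_1,\dots,a_m)$ diagonal of nonzero trace. If this value is already scalar we are done; otherwise $p(a_1,\dots,a_m)=\lambda_1e_{11}+\lambda_2e_{22}$ with $\lambda_1\neq\lambda_2$. Now bring in the index swap $\chi$ and the multilinear map $f$ from \S\ref{im-of-pol}: $\Image f$ lies in the $2$-dimensional plane of diagonal matrices and contains the two independent vectors $\lambda_1e_{11}+\lambda_2e_{22}$ and $\lambda_2e_{11}+\lambda_1e_{22}=p(\chi(a_1),\dots,\chi(a_m))$. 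By Lemma~\ref{dim2}, $\Image f$ is the whole diagonal plane, in particular every scalar matrix. The hypothesis $\ssl_2\subsetneqq\Image p$ is used only to guarantee the existence of a diagonal matrix-unit value with nonzero trace.
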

\begin{proof}
 Note that it is enough to show that at least one scalar matrix belong to the image of $p$.
 According to Lemmas \ref{graph} and \ref{linear} there are matrix units $a_1,\dots,a_m$ such that $p(a_1,\dots,a_m)$ is
 diagonal with nonzero trace.
 Assume that it is not scalar, i.e., $p(a_1,\dots,a_m)=\lambda_1e_{11}+\lambda_2e_{22}.$
 We define again the mapping $\chi$ and $f(T_1,\dots,T_m)$ as in the beginning of $\S\ref{im-of-pol}$ and return to the
 proof of Lemma~8 in \cite{BMR1} where we proved that $\Image f$ consists only of diagonal matrices or only of matrices with zeros
 on the diagonal.
 In our case the image of $f$ consists only of diagonal matrices, which is a $2$-dimensional variety.
 We know that both $p(a_1,\dots,a_m)=\lambda_1e_{11}+\lambda_2e_{22}$ and
 $p(\chi(a_1),\dots,\chi(a_m))=\lambda_1e_{22}+\lambda_2e_{11}$ belong to the image of $f$, and
  therefore every diagonal matrix belong
 to the image of $f$, in particular every scalar matrix.
\end{proof}
Now we are ready to prove the main theorem.
\\[2\baselineskip]
\begin{proof-of-thm}{\ref{main}}
 The second part follows from Lemmas \ref{genfield}, \ref{discr-not-zero}, \ref{unip} and \ref{scalar}.
 In the first part we need to prove that if $p$ is neither PI nor central then $\ssl_2(K)\subseteq\Image p$.
 According to Lemma \ref{genfield}, $\ssl_2(K)\setminus K\subseteq\Image p$, and therefore according to Remark \ref{chr-n2} we need consider only the case $\Char(K)=2$.
 In this case we need to prove that the scalar matrices belong to the image of $p$.
 According to Lemmas \ref{graph} and \ref{linear} there are matrix units $a_1,\dots,a_m$ such that $p(a_1,\dots,a_m)$ is diagonal.
 Assume that it is not scalar. Then we  consider the mappings $\chi$ and $f$ as described in the beginning of~$\S\ref{im-of-pol}$.
 According to Lemma \ref{dim2} the image of $f$ will be the set of all diagonal matrices, and in particular the scalar matrices belong to it.
\end{proof-of-thm}
\begin{rem}
 Assume that $p$ is a multilinear polynomial evaluated on $2\times 2$ matrices over an arbitrary infinite field $K$. Then, according to
 Theorem \ref{main}, $\Image p$ is $\{0\}$, or $K$, or $\ssl_2(K)$ or $\ssl_2(K)\subsetneqq\Image p$. In the last case it is clear that
 $\Image p$ must be Zariski dense in~$M_2(K)$, because otherwise $\dim(\Image p)=3$ and $\Image p$ is reducible, a contradiction.
\end{rem}

\begin{rem}
 Note that the proof of Theorem \ref{main} does not work when $n>2$ since for this case we will need to take more than one function (two
 functions for $n=3$ and more for $n>3$).
 In our proof we used that we have only one function: we proved that it takes values close to $\pm\infty$ and after that used
 continuity.  This does not work for $n\geq 3$. However one can use this idea for the question of possible images
 of trace zero multilinear polynomials evaluated on $3\times 3$ matrices. In this case one function will be enough,
 and one can take $g=\frac{\omega_3^2}{\omega_2^3}$. (One can find the definitions of~$\omega_i$
 in the proof of Theorem 3 in \cite{BMR2}.)
 Moreover according to Lemmas \ref{graph} and \ref{linear} there are matrix units $a_i$ such that $p(a_1,\dots,a_m)$ is
 a diagonal, trace zero, nonzero real matrix, which cannot be $3$-scalar since it will have three real eigenvalues.
 Therefore $p$ cannot be $3$-central polynomial.
 However the question of possible images of $p$ remains being an open problem.
\end{rem}

\section{Images of semi-homogeneous polynomials evaluated on $2\times 2$ matrices with real entries.}
Here we provide a classification of the possible images of
semi-homogeneous polynomials evaluated on $2\times 2$ matrices
with real entries. Let us start with the definitions.

\begin{definition}
A polynomial $p$ (written as a sum of monomials) is called {\it
semi-homogeneous of weighted degree $d\neq 0$} with (integer) {\it
weights} $(w_1,\dots,w_m)$  if for each monomial $h$ of $p$,
taking $d_j$ to be the degree of $x_{j}$ in $p$, we have
$$d_1w_1+\dots+d_nw_n=d.$$ A
semi-homogeneous polynomial with weights $(1,1,\dots, 1)$ is
called $\it{homogeneous}$ of degree $d$.

A polynomial $p$ is {\it completely homogeneous} of multidegree
$(d_1,\dots,d_m)$ if each variable $x_i$ appears the same number
of times $d_i$ in all monomials.
\end{definition}

\begin{definition}
A {\it cone} of $M_n(\R)$ is a subset closed under multiplication
by nonzero constants. An  {\it invariant cone} is a cone invariant
under conjugation. An invariant cone  is  {\it irreducible} if it
does not contain any nonempty  invariant cone.
A {\it semi-cone} of $M_n(\R)$ is a subset closed under multiplication
by positive constants.
An  {\it invariant semi-cone} is a semi-cone invariant
under conjugation. An invariant semi-cone  is  {\it irreducible} if it
does not contain any nonempty  invariant semi-cone.
\end{definition}
\begin{rem}
 Note that any cone is a semi-cone.
\end{rem}

\begin{rem}\label{semcone}
Let $p$ be any semi-homogeneous polynomial of weghted degree $d\neq 0$ with weights $(w_1,\dots,w_m)$.
Thus if $A=p(x_1,\dots,x_m)$ then for any $c\in\R$ we have $p(c^{w_1}x_1,\dots,c^{w_m}x_m)=c^dA$
therefore if $d$ is odd then $\Image p$
is a cone, and if $d$ is even, $\Image p$ is a semi-cone. Hence for any $d$ $\Image p$ is a semi-cone.
\end{rem}
\textbf{Theorem \ref{homogen}.} \emph{Let $p(x_1,\dots,x_m)$ be a
semi-homogeneous polynomial.
 Then $\Image p$ is either $\{0\}$, or the set $\R_{\geq 0}$, i.e., the matrices $\lambda I$ for $\lambda\geq 0$,
 or the set $\R_{\leq 0}$, i.e., the matrices $\lambda I$ for $\lambda\leq 0$,
 or the set $\R$ of scalar matrices,
 or the set $\ssl_{2,\geq0}(\R)$ of trace zero matrices with non-negative discriminant,
 or the set $\ssl_{2,\leq 0}(\R)$ of trace zero matrices with non-positive discriminant,
 or the set $\ssl_2(\R)$,
 or Zariski dense in~$M_2(\R)$.}

\begin{proof}
 Consider the function $g(x_1,\dots,x_m)=\frac{\det p}{\tr^2 p}$.
 If this function is not constant, then $\Image p$ is Zariski dense.
 Assume that it is constant; i.e., $\frac{\det p}{\tr^2 p}=c$.
 Then the ratio $\frac{\lambda_1}{\lambda_2}=\hat c$ of eigenvalues is also a constant.
 If $\hat c\neq -1$ then we can write $\lambda_1$ explicitly as
 $$\lambda_1=\frac{\lambda_1}{\lambda_1+\lambda_2}\tr p=\frac{1}{1+\frac{\lambda_2}{\lambda_1}}\tr p
 =\frac{1}{1+\frac{1}{\hat c}}\tr p,$$
 Therefore $\lambda_1$ is an element of UD, and $\lambda_2=\tr p-\lambda_1$ also.
 According to the Hamilton-Cayley equation, $(p-\lambda_1)(p-\lambda_2)=0$ and therefore, since, by Proposition~\ref{Am1},
 UD is a domain,
 one of the terms $p-\lambda_i$ is a PI. Therefore $p$ is central or PI.
 Therefore we see that any semi-homogeneous polynomial is either PI, or central, or trace zero (if the ratio of eigenvalues is
 $-1$ then the trace is identically zero), or $\Image p$ is Zariski dense.
 If $p$ is PI then $\Image p=\{0\}$.
 If $p$ is central then, by Remark~\ref{semcone}, $\Image p$ is a semi-cone, therefore
 $\Image p$ is either $\R_{\geq 0}$, or $\R_{\leq 0}$, or $\R$.
 If $\Image p$  has trace zero, then   any trace zero matrix $A\in\ssl_2(\R)$ is similar to $-A$.
 Therefore $\Image p=-\Image p$ is symmetric.
 Together with Remark \ref{semcone} we have that $\Image p$ must be a cone.
 The determinant cannot be identically zero since otherwise the polynomial is nilpotent,   contrary to Proposition \ref{Am1}.
 Hence there exists some value with nonzero determinant.
 All the trace zero matrices of positive determinant are pairwise similar, and
 all the trace zero matrices of negative determinant are pairwise similar.
 Therefore in this case all possible images of $p$ are $\ssl_{2,\geq0}(\R)$, $\ssl_{2,\leq0}(\R)$ and $\ssl_{2}(\R)$.
\end{proof}

\begin{examples*}
 $\Image p$ can be the set of non-negative scalars. Take any central polynomial, say $p(x,y)=[x,y]^2$ and consider
 $p^2=[x,y]^4$. If one  takes $-p^2=-[x,y]^4$, then its image
 is the set $\R_{\leq 0}$.

 The question remains   open of whether  or not there exists an example of a trace zero polynomial with non-negative (or non-positive) discriminant.

 There are many polynomials with Zariski dense image which are not dense with respect to the usual Euclidean topology.
 For example the image of the polynomial $p(x)=x^2$  is the set of matrices with two positive eigenvalues,
 or two complex conjugate eigenvalues; in particular any matrix $x^2$ has non-negative determinant.
The image of  the polynomial $p(x,y)=[x,y]^4+[x^4,y^4]$ is the set
of matrices with non-negative trace.
 The question of classifying   possible semi-homogeneous Zariski dense images is not simple, and
 also remains   open.
\end{examples*}

\end{document}